\documentclass[11pt]{amsart}
\usepackage[marginratio=1:1]{geometry}
\usepackage{color}
\definecolor{refs}{rgb}{0.7,0,0}
\definecolor{ext}{RGB}{112,112,112}
\definecolor{cite}{RGB}{034,113,179}
\usepackage[colorlinks=true,citecolor=cite,linkcolor=refs,urlcolor=ext,backref=page]{hyperref}

\makeatletter
\@namedef{subjclassname@2020}{\textup{2020} Mathematics Subject Classification}
\makeatother

\newtheorem{theorem}{Theorem}[section]

\newtheorem{proposition}[theorem]{Proposition}

\theoremstyle{definition}
\newtheorem{definition}[theorem]{Definition}
\newtheorem{remark}{Remark}
\newtheorem*{example}{Example}

\newcommand{\mycomment}[1]{}

\title{Rigidity theorems for cone structures}

\author[T.~Frelik]{Tymon Frelik}\address{Institute of Mathematics, Polish Academy of Sciences,  ul. \'Sniadeckich 8, 00-656 Warszawa, Poland}

\email{t.frelik@uw.edu.pl}

\author[W.~Kry\'nski]{Wojciech Kry\'nski$^*$}
\thanks{$^*$Corresponding author}\address{Institute of Mathematics, Polish Academy of Sciences, ul. \'Sniadeckich 8, 00-656 Warszawa, Poland}

\subjclass[2020]{53C24, 53B15, 34A26}

\email{krynski@impan.pl}

\keywords{Cone structures, causal geometry, rigidity, ordinary differential equations, integrability, dispersionless Lax pairs}

\newcommand{\N}{\mathbb{N}}

\newcommand{\R}{\mathbb{R}}

\newcommand{\PP}{\mathbb{P}}

\newcommand{\rk}{\operatorname{rk}}

\newcommand{\spn}{\mathrm{span}}

\newcommand{\Hom}{\mathrm{Hom}}
\newcommand{\DD}{\mathcal{D}}
\newcommand{\VV}{\mathcal{V}}
\newcommand{\FF}{\mathcal{F}}

\newcommand{\WW}{\mathcal{W}}
\newcommand{\XX}{\mathcal{X}}

\newcommand{\HHH}{\mathcal{H}}
\newcommand{\CC}{\mathcal{C}}

\newcommand{\GL}{\mathrm{GL}}
\newcommand{\Set}{\mathfrak{S}}
\newcommand{\dd}{\mathrm{d}}

\begin{document}
\date{\today}
\maketitle

\begin{abstract}
Cone structures on differentiable manifolds are fields of cones in the tangent bundle. A cone structure is isotrivial, modeled on a fixed immersed submanifold of the projective space, if at each point the projectivised cone is projectively equivalent to that submanifold. We consider cone structures arising from ordinary differential equations via a canonical construction. We prove that isotrivial cone structures in this class, modeled on generic curves in the $n$-dimensional projective space or ruled surfaces in the three-dimensional projective space, are flat. We also discuss applications to causal geometries in four dimensions and dispersionless Lax systems.

\end{abstract}

\section{Introduction}

\subsection{Results.} A \emph{cone structure} on a smooth manifold $M$ is a field of cones in the tangent bundle $TM$, or equivalently, a field of immersed submanifolds in the projectivized tangent bundle $\PP TM$. A cone structure is called \emph{isotrivial}, modeled on $\CC_0 \subset \PP^{n-1}$, if for every point $x \in M$, the corresponding submanifold of the projectivized tangent space $\PP T_xM$ is projectively equivalent to $\CC_0$.

A natural source of cone structures arises in the geometric theory of ordinary differential equations (ODEs). Indeed, every system of ODEs induces, by a tautological construction recalled briefly in the next section, a cone structure on its solution space. The cone structures obtained in this way are special: the cones are necessarily ruled, and the structures satisfy certain additional integrability conditions.

We denote by $\mathfrak{V}_m^{n-1}$ the space of $m$-dimensional immersed submanifolds of $\PP^{\,n-1}$. Throughout this paper we restrict our attention to the cases $m=1$ with arbitrary $n>3$, and $m=2$ with $n=4.$ Let $\mathfrak{R}_2^3\subset\mathfrak{V}^3_2$ denote the set of ruled surfaces in $\PP^3$.

%We equip $\mathfrak{V}_m^{n-1}$ with the Whitney $C^\infty$-topology, so that convergence means smooth convergence of local embeddings on compact sets.
We equip $\mathfrak{V}_m^{n-1}$ with the Whitney $C^\infty$-topology, so that convergence means smooth convergence on compact sets of local parametrizations. The subspace $\mathfrak{R}_2^3$ inherits the topology from $\mathfrak{V}^3_2$. In this topology, subsets defined by algebraic jet conditions are closed. Our main theorem is the following.
\begin{theorem}\label{thm:main}
There exist open dense subsets $\Set_1^{n-1} \subset \mathfrak{V}^{n-1}_1$, for $n>3$, and $\Set_2^3 \subset \mathfrak{R}^3_2$ such that any isotrivial cone structure modeled on $\CC_0 \in \Set_1^{n-1}$ arising from a scalar ordinary differential equation, or on $\CC_0 \in \Set_2^3$ arising from a system of two second-order ordinary differential equations, is flat and the corresponding equation or system is linear.
\end{theorem}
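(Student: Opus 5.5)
We treat the two cases separately but follow the same scheme. For a scalar ODE $y^{(n)}=F(x,y,y',\dots,y^{(n-1)})$ (or, in the case $m=1$, an equation whose solution space $M$ has dimension $n$), the cone at a solution $x\in M$ is the curve $t\mapsto [J^{n-1}\text{-variation}]$ in $\PP T_xM$: it is the projectivisation of the osculating line field of the linearized equation along the solution. If we choose a basis of solutions of the linearized equation, the cone is the curve $t\mapsto [v_1(t):\dots:v_n(t)]$ formed by $n$ independent solutions of a linear ODE of order $n$. The projective invariants of such a curve are the classical Wilczynski (Laguerre--Forsyth) invariants $\theta_3,\dots,\theta_n$ of the linear equation, expressed in a projective parameter. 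Isotriviality modeled on $\CC_0$ then means that for each solution these invariants, written in the canonical projective parameter, coincide with those of $\CC_0$ up to the one-parameter reparametrisation ambiguity (when the parametrisation of the cone itself is not fixed). Analogously, for a pair of second-order ODEs $y_i''=F_i(x,y,y')$ with $\dim M=4$, the cone at a solution is the ruled surface swept by the lines $\spn\{J(t),J'(t)\}$ of Jacobi fields, i.e.\ a ruled surface in $\PP^3$ determined by a linear system $J''=A(t)J'+B(t)J$, and its projective invariants are the Wilczynski invariants of this system (the trace-free part of a suitably normalised curvature and its derivatives).

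The key step is to convert isotriviality into a system of PDEs on $F$. Because the coefficients of the linearized equation are $\partial F/\partial y^{(k)}$ evaluated along the solution, and the cones at all points of $M$ are swept by the same total derivative $D_x$, the condition ``invariants equal those of $\CC_0$'' becomes: a finite set of differential expressions $I_j(F)$ (the Wilczynski invariants of the linearization, i.e.\ the generalized W\"unschmann/Doubrov invariants restricted to $D_x$-evaluation) equal prescribed functions $c_j(s)$ of a projective arc parameter $s$ determined by $F$. Here we choose $\Set$ to be the set of curves (resp.\ ruled surfaces) whose invariants are nonconstant and in general position in the sense that some invariant quotient, say $\theta_4^{3}/\theta_3^{4}$ suitably differentiated, is a local coordinate along $\CC_0$ and satisfies no nontrivial relation of low jet order with its derivatives; such conditions are open, and their complement is given by algebraic jet conditions, hence closed and nowhere dense. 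On $\Set$ we can therefore reconstruct the canonical parameter $s$ from $F$ algebraically, and isotriviality reads as $I_j(F)=c_j(\sigma(F))$ with $\sigma$ itself expressed through the invariants.

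We then differentiate these relations in the fibre directions $\partial/\partial y^{(n-1)}$ (resp.\ $\partial/\partial y_i'$). The fibre-vertical derivatives of the Wilczynski invariants of the linearization have a specific triangular structure: the highest-order term is controlled by the fourth-order vertical derivative $\partial^{k}F/\partial (y^{(n-1)})^{k}$, which must vanish for flatness. Using the genericity of $\CC_0$, we show that the only consistent solution forces the cone-variation along the fibres to be trivial, which yields that $F$ is at most polynomial in the top derivatives with the relevant nonlinear coefficients vanishing; combined with the known characterization (Doubrov, and Grossman/Bryant for pairs) that the cone structure arising from an ODE is flat iff the generalized W\"unschmann invariants vanish together with the $F$-nonlinearity invariants, we conclude flatness and point-equivalence to a linear equation. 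The main obstacle is this step: proving that the overdetermined system forces vanishing, which requires showing that a nonconstant invariant cannot be transported consistently by the flow of $D_x$ and its vertical variations unless the nonlinearity is zero. We would attempt it first by computing the compatibility condition $[\partial_{y^{(n-1)}},D_x]$ applied to the relation $I_j=c_j(\sigma)$ and using genericity to divide out by $c_j'$.
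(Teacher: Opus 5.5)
There is a genuine gap: the proposal never proves the decisive implication. Everything up to rewriting isotriviality as $I_j(F)=c_j(\sigma(F))$ is set-up. The step that has to carry the theorem is that, for $\CC_0$ in your set $\Set$, this system forces the nonlinearity of $F$ to vanish. That step is only asserted (``using the genericity of $\CC_0$, we show\dots''), and you yourself flag it as the main obstacle still to be attempted.

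The claimed ``triangular structure'' of the vertical derivatives of the Wilczynski invariants of the linearization is not justified. Nothing links your genericity condition (an invariant quotient being a coordinate with no low-order relations) to the vanishing you need. So neither rigidity on $\Set$, nor the fact that $\Set$ is the right open dense set, is established.

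The closing step is also incorrect as stated. The generalized W\"unschmann invariants, and likewise the torsion of a pair of second-order equations, vanish precisely when the cones are rational normal curves, resp.\ quadrics. These are exactly the non-generic $\mathrm{GL}(2)$-type models, where non-flat examples exist. Their vanishing also contradicts your own assumption that the invariants of $\CC_0$ are non-constant. For generic $\CC_0$, flatness means linearizability: the equation is equivalent to a linear one whose non-vanishing Wilczynski invariants are those of $\CC_0$. (A smaller point: $t\mapsto[v_1(t):\dots:v_n(t)]$ lies in $\PP T_x^*M$, and the cone is its dual curve.)

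The missing idea is to work on the solution space with an adapted frame rather than with $F$. A generic $\CC_0$ has no projective symmetries, so isotriviality gives a frame $(X_i)$ on $M$, unique up to a conformal factor. In this frame every cone is $\lambda\mapsto[\sum_i\phi^i(\lambda)X_i(x)]$, with the $\phi^i$ independent of $x$.

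The equation-type condition (Proposition~\ref{prop1}, resp.\ an integrable $\WW$ as in Proposition~\ref{prop2}) then becomes an identity in $\lambda$. Its coefficients are built from the structure functions $c^k_{ij}(x)$: linearly for curves, and for surfaces quadratically together with the derivatives $X_s(c^k_{ij})$. So the $\lambda$- and $x$-dependence separate. Differentiating in $\lambda$ yields an overdetermined linear system whose coefficients depend only on a jet of $\CC_0$.

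Whether this system admits only the conformal solutions $c^k_{ij}=\delta^k_ja_i-\delta^k_ia_j$ is governed by ranks of matrices built from that jet, hence is an open condition. Density then follows from one explicit witness: a curve, resp.\ ruled surface, with monomial components of well-separated exponents. Finally $a_i=X_i(\kappa)$ by the Jacobi identity, so the rescaled frame commutes and the structure is flat. Your route would need an equally concrete open condition, together with a witness showing it is non-empty and implies rigidity. As it stands it is a programme rather than a proof.
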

Flatness here means that the structure is locally diffeomorphic to the cone structure in a linear space obtained by translating a fixed cone at the origin to any point of the space. The theorem holds in both the real and complex categories. 

\subsection{Background and motivation.} Besides the discussed relation to differential equations, cone structures naturally appear in various geometric contexts. For instance, they have been exploited in the VMRT (Varieties of Minimal Rational Tangents) program \cite{HM,H1,H2}. In fact, a version of the rigidity theorem for complete intersections has been proved in \cite{FH} (Theorem 1.7). The authors study a notion related to our ``ODE-type'' condition, that is, the so-called \emph{characteristic connection}, which we discuss in Section \ref{sec:char_con}. The proofs of our rigidity theorems use classical tools of local differential geometry. Note that in Remark 1 we provide an algebraic condition characterizing the generic set $\Set_1^{n-1}$ in the simplest case of $n=4$.

Cone structures naturally appear also in mathematical physics. Indeed, cone structures on 4-dimensional manifolds corresponding to $\mathfrak{V}^3_2$ in our notation, generalize conformal metrics and are often referred to as causal structures, see \cite{HS,M} as well as \cite{KM} and references therein. In the metric case the cones are quadratic. Moreover, in split signature, the condition that a structure arises from ordinary differential equations is equivalent to the integrability of the $\alpha$-planes associated with the metric (cf. \cite{CDT,G}). This integrability is in turn equivalent to the metric being anti-self-dual (cf. \cite{P} and \cite{MW}). By analogy, the counterparts of anti-self-dual (ASD) metrics in the setting of general causal structures are precisely the ODE-type structures covered by Theorem \ref{thm:main}. In this language, the theorem can be rephrased as stating that, \emph{for a generic cone, the associated isotrivial ASD causal space-times are necessarily flat}. This significantly restricts the class of potentially nontrivial examples of isotrivial causal structures. Note that apart from the metric case, one may consider Cayley structures \cite{KM} to obtain non-flat causal space-times.

Cone structures also arise in the theory of partial differential equations (PDEs), where they can be studied as characteristic varieties \cite{FK1,KM}. In this setting, the integrability of the equation is reflected in the integrability of the associated cone structures. More generally, one can consider a dispersionless Lax pair $(L_0(u),L_1(u))$, understood as a pair of vector fields on a manifold $M\times \PP^1$, where the additional dimension is parameterized by a spectral parameter $\lambda$, and the vector fields depend on an unknown function $u\colon M\to\R^r$. The commutativity relation $[L_0(u),L_1(u)]=0$ yields a system of PDEs for $u=(u_1,\ldots,u_r)$. For any fixed function $u$, the system defines a cone structure on $M$ by projecting onto $M$, along the fibers of $M \times \PP^1$, the tangent distribution spanned by $L_0(u)$ and $L_1(u)$. In this way, each point $x \in M$ is assigned a cone in $T_xM$, ruled by subspaces parameterized by $\lambda$. Moreover, the commutativity of $L_i(u)$ implies the integrability of the cone structure, which in the case $\dim M=4$, under a suitable non-degeneracy condition on $L_i$, is equivalent to the fact that the cone structure comes from a system of ODEs. Our result shows that for a generic pair $(L_0(u),L_1(u))$ such a structure is flat, whenever $u$ is a solution function. In terms of PDEs it means that the associated Lax system essentially admits only trivial solutions.
In fact, such systems are expected to be highly overdetermined. In a sense, our result implies that \emph{meaningful integrable systems are rare}. Non-trivial Lax systems corresponding to certain cone structures have been recently studied in \cite{K3}.
 
\subsection*{Acknowledgments.}
W.K. would like to thank Jun-Muk Hwang for posing the problem that motivated this work, as well as for many valuable discussions and suggestions.\\
This work was partially supported by the Simons Foundation grant (award no. SFI-MPS-T-Institutes-00010825) and from State Treasury funds as part of a task commissioned by the Minister of Science and Higher Education under the project “Organization of the Simons Semesters at the Banach Center - New Energies in 2026-2028” (agreement no. MNiSW/2025/DAP/491).

\section{Preliminaries}
\subsection{Cone structures.}
A cone structure $\CC$ on a manifold $M$ is a smooth field of immersed submanifolds in the projectivized tangent bundle $\PP TM$
\[
M \ni x \mapsto \CC_x \subset \PP(T_x M).
\]  
We denote by $\hat{\CC}_x$ the preimage of $\CC_x$ under the quotient map $q\colon T_x M \to \PP(T_x M)$. In other words, $\hat{\CC}_x$ is a cone in the tangent space $T_x M$ for any $x \in M$.

Assume $\dim M=n$. We say that a cone structure is \emph{isotrivial} if there exists a submanifold $\CC_0 \subset \PP^{n-1}$ such that, for every $x \in M$, $\CC_x$ is projectively equivalent to $\CC_0$. In this case, we say that $\CC$ is modeled on $\CC_0$. Two cone structures $\CC_i\subset \PP TM_i$, $i=1,2$, on manifolds $M_1,M_2$ are equivalent if there exists a diffeomorphism $\Phi\colon M_1\rightarrow M_2$ such that $\PP\Phi_*\colon\PP TM_1\to \PP TM_2$ satisfies $\PP\Phi_*(\CC_1)=\CC_2$. 

Later, we shall focus on two cases where $\CC_x$ is either a curve or a ruled surface in the 3-dimensional projective space $\PP(T_x M)$. In general, one can also consider structures such that $\CC_x$ belongs to the set $\mathfrak{R}_m(T_x M)$ of $m$-dimensional immersed submanifolds of $\PP(T_x M)$ that are ruled by a 1-parameter family of $(m-1)$-dimensional projective subspaces. Note that for $m=1$, $\mathfrak{R}_1(T_x M)$ coincides with $\mathfrak{V}_1(T_x M)$, the set of all  curves in $T_x M$.

Any cone structure such that $\CC_x \in \mathfrak{R}_m(T_x M)$ for all $x\in M$, has an equivalent description as a smooth field of immersed curves in the Grassmannian bundle $\mathrm{Gr}_m(TM)$ of $m$-planes. Indeed, the ruling of $\CC_x$ implies that $\hat{\CC}_x$ is a 1-parmeter family of $m$-dimensional subspaces of $T_x M$. We shall denote the corresponding curves in the Grassmannian bundle by $N^\CC_x$. Consequently, $N^\CC$ is a fibred bundle over $M$ with 1-dimensional fibers. Note that in the case $m=1$, $N^\CC=\CC$.

\subsection{Associated regular pairs and cone structures of equation type.}
Given a cone structure $\CC$ on $M$, such that $\CC_x\in\mathfrak{R}_m(T_xM)$ for all $x\in M$, let $\pi\colon N^\CC\to M$ denote the natural projection inherited from the Grassmannian bundle $\mathrm{Gr}_m(TM)\to M$. Assume $\dim M=n$, then $N^\CC$ is a manifold of dimension $n+1$. Using $\pi$, we associate to $\CC$ a pair $(\XX,\VV)$ of tautologically defined distributions on $N^\CC$. Firstly we define
\[\XX=\ker \pi_*.\]
Then, for $p\in N^\CC_x$ 
\[
\VV(p)=\pi^{-1}_*(p),
\]
where $p$ on the right-hand side is understood as an $m$-dimensional subspace of $T_xM$. We may equivalently define $\XX$ pointwise as $\XX(p)=\pi^{-1}_*(0)$. Thus, clearly, $\XX\subset \VV$. Note that $\rk \XX=1$ and $\rk\VV=m+1$.
\begin{definition}
Let $\CC$ be a cone structure on $M$, $\dim M=n$, such that $\CC_x\in\mathfrak{R}_m(T_xM)$ for all $x\in M$. We say that the structure is \emph{non-degenerate} if there is $k\in \N$ such that $n=(k+1)m$ and
\[
\rk \VV^i=(i+1)m+1
\]
for $i=0,\ldots,k$, where $\VV^i$ are distributions on $N^\CC$ defined inductively as
\[
\VV^{i+1}=[\XX,\VV^i],\qquad \VV^0=\VV.
\]
\end{definition}
In above, the Lie bracket of distributions is defined as
\[
[\DD_1,\DD_2](x)=\spn\{[X_1,X_2](x)\ |\ X_1\in\Gamma(\DD_1),\ X_2\in\Gamma(\DD_2)\}.
\]

It is easy to see that the pair $(\XX,\VV)$ uniquely recovers the underlying cone structure with the prescribed ruling. Indeed, $M$ is the leaf space of $\XX$, and for $x\in M$, $\hat \CC_x\subset T_xM$ is given as the union of all $\pi_*(\VV(p))$ where $p\in\pi^{-1}(x)$.

\begin{definition}
Let $\CC$ be a non-degenerate cone structure on $M$, $\dim M=n$, such that $\CC_x\in\mathfrak{R}_m(T_xM)$ for all $x\in M$. We say that $\CC$ is of \emph{equation type} if the distribution $\VV\subset TN^\CC$ is locally equivalent to the Cartan distribution on the jet space $J^{k}(\R,\R^m)$.
\end{definition}

The terminology is justified by the observation that in this case there are local coordinates on $N^C$ such that $\XX$ can be identified with the line field spanned by the total derivative vector field of the form
\[
X_F=\partial_t+\sum_{j=1}^m\left(x^1_j\partial_{x^0_j}+\cdots+x^{k}_j\partial_{x^{k-1}_j}+F_j\partial_{x^{k}_j}\right),
\]
for some function $F=(F_1,\ldots,F_m)$, where $(t,x:=(x^0_j,\ldots,x^{k}_j)\ |\ \ j=1,\ldots,m)$, are the standard coordinates on $J^{k}(\R,\R^m)$ corresponding to the consecutive derivatives of the dependent variable: $x^i_j=x^{(i)}_j=\left(\frac{\dd}{\dd t}\right)^ix_j$. In other words, $(\XX,\VV)$ locally encodes a system of ODEs of order $k$, given explicitly by
\begin{equation}\label{eq:ODE}
x^{(k+1)}=F(t,x,x',\ldots,x^{(k)}),\qquad x\in\R^m
\end{equation}
determined uniquely up to contact transformations of coordinates.

Conversely, given a system of the form \eqref{eq:ODE}, one obtains a canonical cone structure $\CC_F$ on the solution space $M_F=J^{k}(\R,\R^m)/X_F$ defined by projecting the Cartan distribution
\[
\VV_{\mathrm{Cartan}}=\spn\left\{X_F,\partial_{x^{k}_j}\ |\ j=1,\ldots,m\right\}
\]
along the integral curves of $X_F$ from $J^{k}(\R,\R^m)$ to $M_F$. Notice that in this context $N^{\CC_F}$ is identified with $J^{k}(\R,\R^m)$.

We have the following geometric characterizations of \emph{cone structures of equation type}. The first one follows from the standard description of the Cartan distribution as a (regular) Goursat distribution (see \cite{Y} for a general exposition and geometric account of the Cartan distribution on jet spaces, and also \cite{K2} for a formulation relevant in the present context). The second one follows from the description of a Cartan distribution associated with a pair of second order ODE as a \emph{path geometry}.
\begin{proposition}\label{prop1}
A non-degenerate cone structure $\CC$ on a manifold $M$ of dimension $n>2$, such that $\CC_x$ is a curve in $\PP(T_xM)$ for any $x\in M$, is of equation type if and only if the associated pair $(\XX,\VV)$ on $N^\CC$ satisfies $[\VV^i,\VV^i]=\VV^{i+1}$ for $i=0,\ldots n-2$.
\end{proposition}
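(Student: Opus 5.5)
We would prove the two implications separately. In the curve case $m=1$, non-degeneracy forces $k=n-1$. Then $\dim N^\CC=n+1$, $\rk\VV=2$ and $\rk\VV^i=i+2$ for $i=0,\ldots,n-1$, so $\VV^{n-1}=TN^\CC$.

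\emph{Necessity.} If $\CC$ is of equation type, we work in the coordinates $(t,x^0,\ldots,x^{n-1})$ on $J^{n-1}(\R,\R)$ in which $\XX=\spn\{X_F\}$ and $\VV=\spn\{X_F,\partial_{x^{n-1}}\}$. A direct induction gives
\[
\VV^i=\spn\{X_F,\partial_{x^{n-1}},\ldots,\partial_{x^{n-1-i}}\}
\]
modulo terms already in the span. The point is that $[X_F,\partial_{x^{j}}]\equiv-\partial_{x^{j-1}}$ modulo the previous generators. The same computation shows that the derived flag $[\VV^i,\VV^i]$ is spanned by the same fields, because $[\partial_{x^j},\partial_{x^l}]=0$. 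Hence $[\VV^i,\VV^i]=\VV^{i+1}$ for all $i$.

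\emph{Sufficiency.} Assume $[\VV^i,\VV^i]=\VV^{i+1}$. Together with non-degeneracy this says that $\VV$ is a rank-two distribution on an $(n+1)$-manifold whose derived flag grows by exactly one at each step. So $\VV$ is a Goursat distribution whose derived flag coincides with the flag $\VV^i$ generated by bracketing with $\XX$. By the Goursat normal form theory (see \cite{Y}), such a distribution is locally equivalent to the Cartan distribution on $J^{n-1}(\R,\R)$ exactly at its regular points. The remaining task is therefore to show that every point is regular, and we expect this to be the main obstacle.

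To handle regularity we use the Cauchy characteristic systems $\mathrm{Ch}(\VV^i)$. For a Goursat flag one has $\mathrm{Ch}(\VV^i)\subset\VV^{i-1}$ with $\rk\mathrm{Ch}(\VV^i)=i$ for $i\ge1$. The plan has three steps.

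First, we show that $L=\mathrm{Ch}(\VV^1)\cap\VV$ is a line field. Since $\rk\VV^1=3$, $\mathrm{Ch}(\VV^1)$ is a line lying in $\VV$, so $L=\mathrm{Ch}(\VV^1)$.

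Second, we show that $\XX$ is transverse to $\mathrm{Ch}(\VV^i)$ for every $i\ge1$. This holds because $[\XX,\VV^i]=\VV^{i+1}\neq\VV^i$, so $\XX\not\subset\mathrm{Ch}(\VV^i)$. In particular $\VV=\XX\oplus L$.

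Third, we deduce regularity. In the Kumpera--Ruiz/Montgomery--Zhitomirskii description, singular points of a Goursat flag are exactly those where $\VV$ becomes tangent to one of the characteristic directions coming from higher members of the flag. The transversality of $\XX$ to every $\mathrm{Ch}(\VV^i)$ rules this out. Concretely, we would build jet coordinates inductively. The leaf spaces of $\mathrm{Ch}(\VV^{n-2})\subset\cdots\subset\mathrm{Ch}(\VV^1)$ give a tower of fibrations $N^\CC\to\cdots\to N_2$, where $N_2$ is three-dimensional and carries the contact distribution induced by $\VV^{n-2}$. At each stage, transversality of the projected $\XX$ to the characteristic line shows that the induced distribution is the Cartan prolongation of the previous one, and not one of its singular charts. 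This identifies $N^\CC$ locally with $J^{n-1}(\R,\R)$ and $\VV$ with the Cartan distribution.

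If the direct appeal to the normal form in \cite{Y} does not match this setting, we would fall back on this inductive prolongation argument. It reduces the claim to the classical fact that a regular one-step prolongation of a contact three-manifold is $J^2(\R,\R)$, and then iterates it.
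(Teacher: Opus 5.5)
Your proposal is correct and takes the same route as the paper. The paper justifies this proposition only by appeal to the characterization of the Cartan distribution as a regular Goursat distribution. Your two observations fill in what it leaves implicit: the $\XX$-flag coincides with the derived flag, and singular points are excluded because $\XX(p)$ can never lie in $\mathrm{Ch}(\VV^{i})(p)$ for $1\le i\le n-2$. The latter should be read pointwise, via tensoriality of the bracket $\VV^i\wedge\VV^i\to TN^\CC/\VV^i$ together with $\rk\VV^{i+1}=\rk\VV^i+1$ at every point. One small slip: the Cauchy characteristics increase, $\mathrm{Ch}(\VV^1)\subset\cdots\subset\mathrm{Ch}(\VV^{n-2})$, so your chain of inclusions is written in reverse, though the tower of leaf spaces you describe is the right one.
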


Note that for $n=3$ the condition of Proposition \ref{prop1} states that $\VV$ is the Engel $(2,3,4)$-distribution, which is a generic condition. Consequently, all non-degenerate 3-dimensional cone structures are necessarily of equation type (see also \cite{FKN,HS}). In our main Theorem \ref{thm:main} it is assumed that $n>3$. 

\begin{proposition}\label{prop2}
A non-degenerate cone structure $\CC$ on a manifold $M$ of dimension~$n=4$, such that $\CC_x$ is a ruled surface in $\PP(T_xM)$ for any $x\in M$, is of equation type if and only if for the associated pair $(\XX,\VV)$ on $N^\CC$ there is an integrable subdistribution $\WW\subset \VV$ satisfying $\rk\WW=2$ and $[\XX,\WW]=TN^\CC$.
\end{proposition}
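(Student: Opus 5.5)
The plan is to prove the two implications separately. The forward direction is a direct check in jet coordinates. For the converse, I will pass to the leaf space of $\WW$ and show that $(N^\CC,\VV)$ is locally the projectivized tangent bundle of a $3$-manifold with its tautological distribution. That distribution is precisely the Cartan distribution on $J^1(\R,\R^2)$. Throughout, $n=4$ and $m=2$, so $k=1$ and $\dim N^\CC=5$, $\rk\VV=3$.

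\emph{Necessity.} Assume $\CC$ is of equation type. Then locally $\VV=\spn\{X_F,\partial_{x^1_1},\partial_{x^1_2}\}$ on $J^1(\R,\R^2)$, with coordinates $(t,x^0_j,x^1_j)$ and $\XX$ spanned by $X_F=\partial_t+\sum_j(x^1_j\partial_{x^0_j}+F_j\partial_{x^1_j})$. Take $\WW=\spn\{\partial_{x^1_1},\partial_{x^1_2}\}$; it is clearly integrable of rank $2$ and contained in $\VV$. We have $[\partial_{x^1_j},X_F]\equiv\partial_{x^0_j}$ modulo $\spn\{\partial_{x^1_1},\partial_{x^1_2}\}$. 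Hence $[\XX,\WW]$, understood as $\WW+\XX$ together with the brackets (as in the paper's convention, where $\VV^1=[\XX,\VV]$ contains $\VV$), contains $\partial_t,\partial_{x^0_j},\partial_{x^1_j}$. Therefore $[\XX,\WW]=TN^\CC$. I should remark that the identification of $\XX$ with $\spn\{X_F\}$ (rather than only $\VV$ with the Cartan distribution) is part of what the coordinate description preceding Proposition \ref{prop1} provides. If needed, this can be justified by noting that $\XX$ is a line field in $\VV$ transverse to the vertical $\WW$, so it has the form $X_F$ up to scale.

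\emph{Sufficiency.} Let $\WW\subset\VV$ be integrable with $\rk\WW=2$ and $[\XX,\WW]=TN^\CC$. In particular $\XX\not\subset\WW$, so $\VV=\XX\oplus\WW$. Locally let $Q=N^\CC/\WW$ be the $3$-dimensional leaf space, with projection $\rho\colon N^\CC\to Q$. Since $\ker\rho_*=\WW\subset\VV$, the image $\rho_*(\VV(p))$ is a line in $T_{\rho(p)}Q$. This defines a map
\[
\Psi\colon N^\CC\to\PP TQ,\qquad \Psi(p)=\rho_*(\VV(p)),
\]
between $5$-manifolds, covering $\rho$. By construction, $\Psi_*^{-1}$ of the tautological distribution $\{\xi:\ \text{(projection of }\xi)\in\text{line}\}$ consists of vectors whose $\rho_*$-image lies in $\rho_*\VV$. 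This is exactly $\VV+\ker\rho_*=\VV$, so $\Psi$ pulls the tautological rank-$3$ distribution back to $\VV$. The tautological distribution on $\PP TQ$ is locally the Cartan distribution on $J^1(\R,\R^2)$: choose coordinates $(t,y_1,y_2)$ on $Q$ and the affine chart $[1:p_1:p_2]$.

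\emph{Main step: $\Psi$ is a local diffeomorphism.} It suffices to show that $\Psi$ is injective along the fibres of $\rho$, i.e.\ that $\Psi_*|_{\WW}$ is injective into the vertical space $T\PP(T_qQ)\cong\Hom(\ell,T_qQ/\ell)$, where $\ell=\rho_*\XX$. This is where I expect to spend care. The derivative of the line field $\rho_*\XX$ along a leaf of $\WW$ in the direction $w$ should be identified with the class of $\rho_*[w,X]$ modulo $\ell$, for $X$ a section of $\XX$ pushed down. I will verify this by choosing $X$ projectable along $\WW$ up to rescaling, or more simply by computing in coordinates adapted to $\WW=\spn\{\partial_{p_1},\partial_{p_2}\}$ with $X=\partial_t+a_1\partial_{y_1}+a_2\partial_{y_2}+(\ldots)\partial_{p}$, so that $\Psi=(t,y,a_1,a_2)$. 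The condition $[\XX,\WW]=TN^\CC$ then means that the brackets $[\partial_{p_i},X]$ span $T N^\CC$ modulo $\VV$. These brackets are $\partial_{p_i}a_1\,\partial_{y_1}+\partial_{p_i}a_2\,\partial_{y_2}$ modulo $\VV$, so this spanning condition is exactly $\det(\partial a_j/\partial p_i)\neq0$. Hence $\Psi$ has invertible Jacobian, and $\VV$ is locally equivalent to the Cartan distribution, i.e.\ $\CC$ is of equation type.
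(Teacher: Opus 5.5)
Your proposal is correct and follows essentially the paper's route. The paper only invokes the identification of $(N^\CC,\WW,\XX)$ with a three-dimensional path geometry, that is, with $\PP TQ$ over the leaf space $Q$ of $\WW$; your map $\Psi$ and the Jacobian condition $\det(\partial a_j/\partial p_i)\neq 0$ are exactly the standard proof of that identification, written out. One point in the necessity part should be stated rather than assumed: transversality of $\XX$ to the vertical distribution follows from non-degeneracy, since if $\XX(p)\subset\WW(p)$ then $[\XX,\VV](p)$ would have dimension at most $4$.
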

Note that with a chosen integrable $\WW\subset \VV$, the triple $(N^\CC,\WW,\XX)$ defines a three-dimensional path geometry (compare with \cite{G}), wherein, in the jet picture, $N^\mathcal{C}\cong J^1(\mathbb{R},\mathbb{R}^2)$,
\[
\WW=\spn\{\partial_{x^1_1},\partial_{x^1_2}\}\ \  \mathrm{and}
\ \  \XX=\spn\{\partial_t+x^1_1\partial_{x^0_1}+x^1_2\partial_{x^0_2}+F_1\partial_{x_1^1}+F_2\partial_{x_2^1}\}.
\]

\section{Rigidity of structures of equation type.}
We shall split the main Theorem \ref{thm:main} into two cases, corresponding respectively to curves in $\PP TM$ for manifolds of arbitrary dimension $n>3$ and to surfaces in $\PP TM$ for 4-dimensional manifolds $M$.
\subsection{Structures modeled on curves in $\PP^{n-1}$}
\begin{theorem}\label{thm1}
There exist open dense subsets $\Set_1^{n-1} \subset \mathfrak{V}^{n-1}_1$, for $n>3$, such that any isotrivial cone structure modeled on $\CC_0 \in \Set_1^{n-1}$ arising from a scalar ordinary differential equation is flat and the corresponding equation is linear.
\end{theorem}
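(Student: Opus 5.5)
We work in the jet picture. By Proposition~\ref{prop1}, a non-degenerate cone structure of equation type modeled on curves is, locally, the structure $\CC_F$ on the solution space $M_F$ of a scalar ODE $x^{(n)}=F(t,x,x',\ldots,x^{(n-1)})$, with $N^{\CC}=J^{n-1}(\R,\R)$ and $\XX=\spn\{X_F\}$. The cone $\CC_x$ at a solution $x$ is the curve $t\mapsto \pi_*(\partial_{x^{n-1}})$ along the integral curve of $X_F$. Its osculating flag is produced by the iterated brackets $\mathrm{ad}_{X_F}^i(\partial_{x^{n-1}})$, $i=0,\ldots,n-1$, projected to $T_xM_F$.

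The first step is to compute, in this adapted frame, the linear ODE satisfied by the lifted curve $\gamma(t)=\pi_*(\partial_{x^{n-1}})\in T_xM_F$. Here $\mathrm{ad}_{X_F}^n(\partial_{x^{n-1}})$ is a combination of the lower brackets, with coefficients that are differential polynomials in $F$. These are essentially the coefficients of the linearized (variational) equation $y^{(n)}=\sum_i F_{x^{(i)}}\,y^{(i)}$, written along the solution. The projective invariants of the curve $\gamma$ in $\PP^{n-1}$ are then the classical Wilczynski-type invariants of this linear equation, with respect to the projective parameter. So the projective type of $\CC_x$ is encoded by the Wilczynski invariants $\theta_3,\ldots,\theta_n$ of the linearization, as functions along each fiber $\pi^{-1}(x)$.

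For a generic curve $\CC_0$ (defining $\Set_1^{n-1}$), the invariants are nonconstant in a suitable normalized sense, for example a nonvanishing derivative of an absolute invariant such as $\theta_4/\theta_3^{4/3}$. Two points of the curve then have different invariant values, so a projective parametrization is rigid. Isotriviality therefore becomes a system of PDEs on $F$. Fixing a canonical parameter $s$ on $\CC_0$ via the normalized invariants, each fiber inherits a canonical parameter $s(t,x)$, and every absolute invariant must be a fixed function of $s$. The key condition is that the absolute invariants of the linearization be functions of a single variable, which is transported along $X_F$ in a prescribed way.

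The main obstacle is to combine these isotriviality conditions with the nonlinearity of $F$ and show they force $F_{x^{(i)}x^{(j)}}=0$, i.e.\ $F$ affine in the fiber variables. I would first try to show that isotriviality forces the projectivized curve to have constant invariants in $x$ along transversal directions. Differentiating the identities for the absolute invariants in the $\partial_{x^{n-1}}$ direction, and using that $[\partial_{x^{n-1}},X_F]$ involves $F_{x^{(n-1)}}$, should yield algebraic relations which, for generic $\CC_0$ (this is exactly where the open dense set is cut out, by nonvanishing of certain jet polynomials in the invariants of $\CC_0$), force second derivatives of $F$ to vanish. Once $F$ is linear, the cone structure is that of a linear ODE. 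Since the Wilczynski invariants are then those of a curve of fixed projective type, a point transformation reduces the equation to a constant-coefficient one, up to projective reparametrization. The solution space is a vector space with translation-invariant cone, hence flat. The case $n=4$, treated in Remark~1, serves as the model computation for checking the genericity condition explicitly.
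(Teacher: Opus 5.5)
Your proposal has a genuine gap at exactly the point where the theorem has content. Everything up to ``isotriviality therefore becomes a system of PDEs on $F$'' is a reformulation. The step you call the main obstacle, deriving flatness from that system for generic $\CC_0$, is only asserted (``should yield algebraic relations which \ldots\ force second derivatives of $F$ to vanish''), with no mechanism given.

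Moreover, the target you aim at cannot be reached as stated. $F$ is only defined up to contact (in particular point) transformations, and $F_{x^{(i)}x^{(j)}}=0$ is not invariant under them. For instance, substituting $x=u+u^3$ into $x^{(n)}=0$ gives an equation $u^{(n)}=F(u,\ldots,u^{(n-1)})$ with $F$ nonlinear, while the induced cone structure on the solution space is the flat one. So no argument from isotriviality can force $F$ to be affine in the fiber variables in arbitrary coordinates; you need an invariant target.

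Finally, you do not address density of $\Set_1^{n-1}$. Even granting that the bad curves are cut out by algebraic conditions on jets, so that the good set is open, you must exhibit at least one curve for which those conditions really force flatness. Otherwise the good set could be empty.

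The paper avoids all of this by working with a frame on $M$ rather than with $F$. Isotriviality, together with the assumption that $\CC_0$ has no symmetries, gives a frame $(X_0,\ldots,X_{n-1})$ on $M$ unique up to a factor $\kappa$. The cone is then $V=\sum_i\phi^i(\lambda)X_i$, with $(x,\lambda)$ as coordinates on $N^\CC$. The equation-type condition $[V_i,V_j]\in\VV^{\max(i,j)+1}$ becomes the \emph{linear} system $\sum_{s,t,k}(\phi^s)^{(i)}(\phi^t)^{(j)}\beta^l_kc^k_{st}=0$, $l>\max(i,j)$. In it the unknown structure functions depend only on $x$ and the coefficients only on $\lambda$.

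This separation is the key idea, and the rest follows from it:
\begin{itemize}
\item Differentiating in $\lambda$ gives an overdetermined constant-coefficient system.
\item Its solutions always include $\delta^k_ja_i-\delta^k_ia_j$, which the Jacobi identity identifies with the conformal gauge $a_i=X_i(\kappa)$.
\item The curves admitting further solutions form a closed set given by rank conditions on jets.
\item The explicit curve $\phi^0=\lambda^n+\lambda^{m_0}$, $\phi^i=\lambda^{m_i}$, with suitably generic exponents, shows by comparing degrees in $\lambda$ that this closed set is proper.
\end{itemize}
In your jet picture the unknown $F$ and the curve parameter are coupled nonlinearly through the Wilczynski invariants of the linearization, so no such linear separation is visible.

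A smaller point: once the equation is linear, you should not claim a reduction to constant coefficients. That would give $\CC_0$ a one-parameter group of projective symmetries, contradicting genericity. You do not need it anyway. For a linear equation the cone at a solution $y$ is $t\mapsto[z_t]$, where $z_t$ is the homogeneous solution with $z_t^{(k)}(t)=0$ for $k<n-1$ and $z_t^{(n-1)}(t)=1$. This curve is independent of $y$, so the structure is translation invariant and hence flat.
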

\begin{proof}
Let $M$ be a manifold of dimension $n$. Fix an $n$-dimensional vector space $V$ and a basis $(v_0,\ldots, v_{n-1})$ in $V$. We shall identify $\PP^{n-1}$ with $\PP(V)$. Then, there exist functions $\phi^i$, $i=0,\ldots,n-1$, such that an open subset of $\CC_0\subset\PP^{n-1}$ can be parameterized in the following way
\[
\lambda\mapsto \spn\left\{\phi^0(\lambda)v_0+\cdots+\phi^{n-1}(\lambda)v_{n-1}\right\}\in \PP(V),
\]
Let $\CC$ be an isotrivial cone structure modeled on $\CC_0$. It follows from the isotriviality of $\CC$ that any point in $M$ has a neighbourhood with a local frame $(X_0,\ldots,X_{n-1})$ such that, for any $x$ in this neighbourhood, there is a linear map $f_x\colon T_xM\to V$ sending $X_i$ to $v_i$ and transforming  $\hat \CC_x$ to $\hat\CC_0$. By abuse of notation, let us still denote the pullback $f^*\phi$ as $\phi$ for brevity. We get that $\CC_x$ can be locally parameterized as
\[
(x,\lambda)\mapsto \R(\phi^0(\lambda)X_0(x)+\cdots+\phi^{n-1}(\lambda)X_{n-1}(x))\in \PP(T_xM).
\]
From now on we shall assume that $\CC_0$ has no symmetry. It follows that the frames $(X_0,\ldots,X_{n-1})$ as above are only given up to a conformal rescaling:
\[
(X_0,\ldots,X_{n-1})\mapsto \kappa (X_0,\ldots,X_{n-1}),
\]
where $\kappa\colon M\to\R$ is a positive function.

Let $c_{ij}^k$ be the structure functions of the frame $(X_0,\ldots,X_{n-1})$, i.e.,
\[
[X_i,X_j]=\sum_{k=0}^{n-1}c_{ij}^kX_k.
\]
It is a matter of computations to show that the structure functions transform as follows
\begin{equation}\label{conformal}
c_{ij}^k\mapsto\kappa c_{ij}^k+\delta_j^kX_i(\kappa)-\delta_i^kX_j(\kappa)
\end{equation}
under conformal transformations of the frame.

Our goal is to prove that $\smash{c_{ij}^k}=0$ modulo the conformal transformations \eqref{conformal}, for all cones $\CC_0$ from an open and dense subset of $\mathfrak{V}^{n-1}_1$ provided that 
\[
[\VV^i,\VV^i]=\VV^{i+1},
\]
as in Proposition \ref{prop1}. Indeed, if the structure functions $\smash{c^k_{ij}}$ of the frame vanish (modulo conformal transformations), then the frame is locally equivalent (modulo a conformal transformation) to the standard frame in the Euclidean space and consequently the structure is flat.

We shall denote
\[
V(x,\lambda)=\phi^0(\lambda)X_0(x)+\cdots+\phi^{n-1}(\lambda)X_{n-1}(x).
\]
Then,
\[
V_i=V^{(i)}=\left(\tfrac{\dd}{\dd\lambda}\right)^iV=(\phi^0)^{(i)}X_0+\cdots+(\phi^{n-1})^{(i)}X_{n-1}
\]
are consecutive derivatives of $V(x,\lambda)$ with respect to $\lambda$.

Notice that variables $(x,\lambda)$ form a local coordinate system on $N^\CC$. We can consider the vector fields $V_i(x,\lambda)$ as vector fields on $N^\CC$ and we obtain
\[
\begin{aligned}
\XX&=\spn\{\partial_\lambda\}\\
\VV^0_{(x,\lambda)}&=\spn\left\{\partial_\lambda,V(x,\lambda)\right\}\\
\VV^1_{(x,\lambda)}&=\spn\left\{\partial_\lambda,V(x,\lambda),V_1(x,\lambda)\right\}\\
&\vdots\\
\VV^{n-1}_{(x,\lambda)}&=\spn\left\{\partial_\lambda,V(x,\lambda),V_1(x,\lambda),\ldots,V_{n-1}(x,\lambda)\right\}.
\end{aligned}
\]
Consequently, the condition for the cone structure to be of equation type can be expressed as
\[
[V_i,V_j]\in\VV^{j+1}
\]
for $i,j=0,\ldots,n-2$ and $i<j$. Note that for $n=3$ this condition is void (see also the remark following Proposition \ref{prop1}). We have
\[
[V_i,V_j]=\sum_{s,t,k=0}^{n-1}(\phi^s)^{(i)}(\phi^t)^{(j)}c_{st}^kX_k.
\]
It follows from the non-degeneracy assumption that there exist matrix-valued functions $\beta=(\beta_i^j)_{i,j=0,\ldots,n-1}$ depending on the parameter $\lambda$ such that
\[
X_i=\sum_{j=0}^{n-1}\beta_{i}^jV_j.
\]
Indeed, $\smash{\beta=(\beta_i^j)_{i,j=0,\ldots,n-1}}$ is the inverse matrix to $\smash{((\phi^{i})^{(j)})_{i,j=0,\ldots,n-1}}$. We emphasize that the matrices depend on $\lambda$ but not on $x$. In particular, the entries $\smash{\beta_i^j}$, similarly to the functions $\phi^i$, depend solely on the fixed curve $\CC_0\subset \PP(V)$. We get that
\[
[V_i,V_j]=\sum_{s,t,k,l=0}^{n-1}(\phi^s)^{(i)}(\phi^t)^{(j)}\beta_k^lc_{st}^kV_l,
\]
and the condition for the structure to be of equation type reads
\begin{equation}\label{system}
\sum_{s,t,k=0}^{n-1}(\phi^s)^{(i)}(\phi^t)^{(j)}\beta_k^lc_{st}^k=0, \quad l=\max(i,j)+1,\ldots,n-1.
\end{equation}
It is a linear system for the structure functions $c_{st}^k$. Notice that under the conformal transformation \eqref{conformal} the left hand side of \eqref{system} does not change. In fact, we can consider a general transformation
\begin{equation}\label{conformal_general}
\tilde c_{ij}^k=\kappa c_{ij}^k+\delta_j^ka_i-\delta_i^ka_j
\end{equation}
for arbitrary functions $a_i$ and prove that $\tilde c_{ij}^k$ solve \eqref{system} if and only if $c_{ij}^k$ solve \eqref{system}.
Indeed, it is sufficient to substitute $c_{st}^k:=\delta_s^ka_t$ into \eqref{system}. For such a choice we get
\[
\sum_{s,t,k=0}^{n-1}(\phi^s)^{(i)}(\phi^t)^{(j)}\beta_k^lc_{st}^k:=\sum_{t=0}^{n-1}(\phi^t)^{(j)}a_t\sum_{s=0}^{n-1}(\phi^s)^{(i)}\beta^l_s=\sum_{t=0}^{n-1}(\phi^t)^{(j)}a_t\delta^l_i,
\]
which is zero for $l\neq i$, and in particular for $l>i$ as in \eqref{system} (we use above that, by definition, $\beta^l_s$ are coefficients of the inverse matrix to $(\phi^s)^{(i)}$). In the case of a conformal transformation we have $a_i=X_i(\kappa)$. Conversely, if the structure functions of a frame satisfy $c_{ij}^k=\delta_j^ka_i-\delta_i^ka_j$ then we can always locally find $\kappa$ such that $a_i=X_i(\kappa)$. This follows directly from the Jacobi identity which implies the compatibility conditions for differential equations $a_i=X_i(\kappa)$, $i=0,\ldots,n-1$.

The coefficients $c_{ij}^k$ are functions on $M$, i.e. depend on $x$ only, in contrast to the coefficients of system \eqref{system} which are functions of $\lambda$. Hence, the system \eqref{system} depends solely on the curve $\CC_0$ and, furthermore, each equation in \eqref{system} can be differentiated with respect to $\lambda$ arbitrarily many times, producing new equations that are also satisfied by $c_{ij}^k$. The resulting system can be evaluated at a fixed value $\lambda=\lambda_0$ giving a linear system for $c_{ij}^k$ with constant coefficients. The coefficients of this system depend on a high jet of the curve $\CC_0$ at $\lambda_0$ (which can, in fact, be made arbitrarily high by successive differentiation). It is expected that the system is overdetermined for generic curves. In fact, we shall show that for the generic $\CC_0$ the system admits only a trivial solution (modulo \eqref{conformal_general}).

The existence of a nontrivial solution can be expressed in terms of the rank of the system, which is given by a polynomial condition on the coefficients (given by the vanishing of certain determinants). Consequently, this defines a closed subset in the space of curves $\mathfrak{V}^{n-1}_1$. To complete the proof, it remains to show that this subset is of positive codimension. Since it is defined by polynomial conditions it suffices to show that there exists at least one curve satisfying the statement of the theorem.

Chose $n$ numbers, $m_0,m_1,\ldots,m_{n-1}\in\N$, all greater than $n$ and such that $m_i<m_j$ for $i<j$ and all $m_s+m_t-m_k$ as well as $n+m_t-m_0$ are different numbers for $k\neq t,s$. Let $\phi^0(\lambda)=\lambda^n+\lambda^{m_0}$ and $\phi^i(\lambda)=\lambda^{m_{i}}$ for $i>0$. Consider equation \eqref{system} with $l=n-1$. Since functions $\phi^i$ are polynomial in $\lambda$, the left hand side, multiplied by the determinant of matrix $((\phi^i)^{(j)})_{i,j=0,\ldots,n-1}$, which is in the denominator of all $\beta^l_k$, is a polynomial in $\lambda$ as well. Firstly, we shall compute the order of the coefficient next to $c^k_{st}$ as a polynomials in $\lambda$. Let $\mu=m_0+\cdots+m_{n-1}$. Then, after clearing the denominator, $\beta^{n-1}_k$ is a polynomial of order $\mu-m_k-\frac{1}{2}(n-2)(n-1)$, which is an order of the determinant of the matrix $((\phi^i)^{(j)})_{j=0,\ldots,n-2; i=0,\ldots,n-1; i\neq k}$. Consequently, the coefficient next to $c^k_{st}$ is of order
\[
m^k_{st}=\mu-m_k-\frac{1}{2}(n-2)(n-1)+m_s+m_t-i-j,
\]
where $i$ and $j$ are fixed for a given equation in the system \eqref{system}. Note that, due to assumptions on $m_i$'s, coefficients of $c^{k_1}_{s_1t_1}$ and $c^{k_2}_{s_2t_2}$ are of the same order only if $k_1=s_1$, $k_2=s_2$ and $t_1=t_2$ (or with the roles of the lower indices swapped). This automatically gives that \eqref{system} implies that $c^k_{st}$ vanish provided that $k\neq s$ and $k\neq t$, since the corresponding coefficients are independent polynomials in $\lambda$. The remaining coefficients are of the form $c^s_{st}$. For any fixed $t$ we get a system of equations for $c^s_{st}$ where $s=0,\ldots,n-1$, $s\neq t$, obtained by fixing a different value of $i$ in \eqref{system}. We have $n-3$ such equations for each $t$, which are independent. As a solution we get that $c^s_{st}$ are fixed up to 2-parameter freedom (for each given $t$).

One additional equation can be obtained by considering the term $\lambda^n$ in $\phi^0$. It contributes with a monomial of a minimal possible order $\mu+n-m_1-m_k-\frac{1}{2}(n-2)(n-1)$ in the formula for $\beta^{n-1}_k$, for $k>0$. Further on, this term gives a term of order $\mu+n-m_2-\frac{1}{2}(n-2)(n-1)+m_t-i-j$ in $\lambda$ standing next to an expression involving $c^s_{st}$ where $s>0$. This coefficient has to vanish provided that \eqref{system} holds. Ultimately we get, together with the previously obtained $n-3$ equations, $n-2$ independent equations for $c^s_{st}$ (for each $t$), which fix the coefficients uniquely up to conformal transformations \eqref{conformal_general}. This completes the proof.
\end{proof}

\begin{remark}
Existence of the non-trivial solution to system \eqref{system} implies that there is a polynomial relation between functions $\phi^i$ and their derivatives up to order $n$. This gives a necessary condition on $\CC_0$. It is still a challenge to provide a complete characterization of those $\CC_0$ which are not ``generic,'' i.e that do not satisfy the Theorem \ref{thm1}. For example, in the case of dimension 4 we can equivalently write the following equation
\[
\sum_{s,t=0}^3(\phi^s)(\phi^t)^{'}\left(\sum_{(i,j,k,l)=cycl(0,1,2,3)}(-1)^ic^i_{st}\Delta^{jkl}\right)=0
\]
where $\Delta^{pqr}$ is a determinant of a $3\times 3$ matrix $((\phi^s)^{(i)})_{i=0,1,2}^{s=p,q,r}$. This can be interpreted as a condition that the osculating cone of $\CC_0$ (the tangential variety) is contained in a variety defined by an equation of order 5 (provided there are nontrivial $c_{ij}^k$ solving the equation) -- it is a very restrictive condition.
\end{remark}

\begin{example}
In the case of $\mathrm{GL}(2)$-structures (see \cite{B,DT,KMet,K1}), the curves $\CC_x$ are rational normal curves. Then $\phi^i$ are polynomials in $\lambda$ of degree $n-1$ and it can be verified that the system \eqref{system} is underdetermined in this case, leading to the existence of non-flat structures of equation type. Specifically, if $M$ is of dimension 4, the system consists of $8$ equations for $24$ unknown functions $c_{st}^l$. One can check that the vanishing of \eqref{system} is equivalent to the vanishing of Bryant's torsion introduced in \cite{B}.
\end{example}

\subsection{Structures modeled on surfaces in $\PP^3$.}
\begin{theorem}\label{thm2}
There exist open dense subsets $\Set_2^3 \subset \mathfrak{R}^3_2$ such that any isotrivial cone structure modeled on $\CC_0 \in \Set_2^3$ arising from a system of two second-order ordinary differential equations is flat and the corresponding system is linear.
\end{theorem}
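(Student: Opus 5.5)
The plan mirrors the proof of Theorem \ref{thm1}, with the curve replaced by a ruled surface and Proposition \ref{prop1} replaced by Proposition \ref{prop2}.

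\emph{Setup.} I identify $\PP^3$ with $\PP(V)$ for a basis $(v_0,\ldots,v_3)$ of $V$. A ruled surface $\CC_0$ is then locally the union of lines
\[
\lambda\mapsto\spn\{\phi(\lambda),\psi(\lambda)\},\qquad \phi=\sum_i\phi^i v_i,\quad \psi=\sum_i\psi^iv_i .
\]
Assuming $\CC_0$ has no projective symmetry (an open dense condition), isotriviality gives a local frame $(X_0,\ldots,X_3)$ on $M$, unique up to conformal rescaling. We set $V=\sum\phi^iX_i$ and $W=\sum\psi^iX_i$. On $N^\CC$, with coordinates $(x,\lambda)$, we have $\XX=\spn\{\partial_\lambda\}$ and $\VV=\spn\{\partial_\lambda,V,W\}$. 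Non-degeneracy means that $V,W,V',W'$ form a frame; let $\beta$ be the inverse of the $\lambda$-dependent matrix expressing $V,W,V',W'$ in terms of the $X_i$.

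\emph{Reduction to a linear system.} The rank-2 subdistributions $\WW\subset\VV$ transverse to $\XX$ are parametrized by a function $h(x,\lambda)$ and, after normalization, are spanned by
\[
\WW=\spn\{V+h\partial_\lambda,\;W+g\partial_\lambda\}.
\]
Up to the choice of ruling parametrization, this leaves two unknown functions $g,h$. The integrability of $\WW$ reads
\[
[V+h\partial_\lambda,W+g\partial_\lambda]\in\WW .
\]
Expanding with $[X_i,X_j]=\sum c_{ij}^kX_k$ and writing the result in the frame $V,W,V',W'$, the $V'$ and $W'$ components give two equations. Both are linear in $c^k_{st}$, with coefficients depending on $\lambda$, and they involve $g,h$ only through terms like $hW'-gV'$. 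I therefore solve these two components for $g$ and $h$ as explicit expressions in $c$ and $\lambda$. The remaining content of the condition is that the $\partial_\lambda$ component is consistent, i.e.
\[
V(g)-W(h)+h\partial_\lambda g-g\partial_\lambda h=(\text{combination in }\WW).
\]
This yields one scalar PDE identity in $\lambda$. Its $\lambda$-dependence is explicit, while $c^k_{st}$ and their $X$-derivatives depend only on $x$. The condition $[\XX,\WW]=TN^\CC$ then holds automatically by non-degeneracy.

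I next check, exactly as for \eqref{conformal_general}, that the substitution $c^k_{st}=\delta^k_sa_t-\delta^k_ta_s$ solves the system. It should be absorbed by shifting $g$ and $h$ by terms proportional to $a(V)$ and $a(W)$; here the homogeneity of the cone enters.

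\emph{Genericity argument.} Expanding the identity in $\lambda$, or equivalently differentiating it in $\lambda$ and evaluating at $\lambda_0$, produces infinitely many equations on the finitely many jets of $c$ at a point. These equations are polynomial in a finite jet of $(\phi,\psi)$, so the set of surfaces admitting only the conformal (trivial) solution is open, and it is the complement of an algebraic condition. Density then reduces to exhibiting a single ruled surface for which the only solutions are the trivial ones. I would pick monomial data such as
\[
\phi=\lambda^{m_0}v_0+\lambda^{m_1}v_1+\lambda^{n}v_0,\qquad \psi=\lambda^{m_2}v_2+\lambda^{m_3}v_3,
\]
with generic exponents chosen so that the orders in $\lambda$ of the coefficients of distinct $c^k_{st}$ are pairwise different except for the conformal pairs. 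Order counting then kills every non-conformal $c^k_{st}$, and the perturbation $\lambda^n$ removes the residual freedom, as in Theorem \ref{thm1}.

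Once $c\equiv0$ modulo conformal transformations, the frame is conformally flat. Hence the cone structure is flat, and the associated path geometry is that of a linear system.

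\emph{Main obstacle.} The main obstacle is the nonlinearity in the auxiliary unknowns $g,h$. Unlike the curve case, the system is not immediately linear in $c$. The step must be done carefully: first eliminate $g,h$ using the two algebraic components, then verify that the residual identity gives enough independent equations. If the nonlinear terms cause trouble, I would linearize around the flat solution, since the claim concerns a Zariski-open condition. Alternatively, I would observe that at the lowest order in $\lambda$ only the linear terms survive for the monomial example.
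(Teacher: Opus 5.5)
Your setup and reduction coincide with the paper's. You solve the $V'$, $W'$ components for the $\partial_\lambda$-coefficients and keep the single scalar identity \eqref{eq:integrability2}. The genericity step, however, has two genuine gaps.

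\textbf{The witness surface cannot work.} Since $\phi(\lambda)\in\spn\{v_0,v_1\}$ and $\psi(\lambda)\in\spn\{v_2,v_3\}$, every ruling meets the two fixed skew lines $\PP(\spn\{v_0,v_1\})$ and $\PP(\spn\{v_2,v_3\})$. Two consequences follow.
\begin{enumerate}
\item The surface is preserved by $\mathrm{diag}(s,s,t,t)$, which contradicts your no-symmetry assumption.
\item More seriously, $[V,W]=\sum_{i\in\{0,1\},\,j\in\{2,3\}}\phi^i\psi^j[X_i,X_j]$ never involves $[X_0,X_1]$ or $[X_2,X_3]$. So $c^k_{01}$ and $c^k_{23}$ do not enter the integrability condition at all, and no order counting can force them to vanish. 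The $\lambda^n$ term does not help, since it stays inside $\spn\{v_0,v_1\}$.
\end{enumerate}
Concretely, take a product $\Sigma_1\times\Sigma_2$ with $X_0,X_1$ tangent to $\Sigma_1$ and $X_2,X_3$ tangent to $\Sigma_2$. Then $[V,W]=0$, so $\WW=\spn\{V,W\}$ is integrable for arbitrary frames on the factors. Choose $X_0=\partial_y$, $X_1=\partial_z+y^2\partial_y$, so that $[X_0,X_1]=2yX_0$. This gives an isotrivial structure of equation type modeled on your surface. Its frame cannot be made commuting by any rescaling $\kappa_1(X_0,X_1)$, $\kappa_2(X_2,X_3)$ allowed by that symmetry. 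You need rulings in general position, as in the paper's example. There all four entries $\phi^t_s$ of the Grassmannian chart are independent sums of monomials with widely separated exponents, so that $[W_1,W_2]$ involves all six brackets.

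\textbf{The order-counting step does not transfer.} Your claim that ``order counting kills every non-conformal $c^k_{st}$, and the perturbation removes the residual freedom'' is imported from the linear case and is unavailable here. The residual identity contains no term linear in $c$: it is quadratic in $c$ plus linear in $X_s(c)$. Your fallbacks do not repair this.
\begin{enumerate}
\item Linearizing at $c=0$ discards the quadratic terms and leaves only derivative terms. Then every constant $c$, for instance a left-invariant frame on a Lie group, solves the linearized problem, so no rigidity can follow.
\item There are no linear terms left to survive at lowest order.
\end{enumerate}
What is missing is the paper's mechanism.
\begin{enumerate}
\item Regard $c^k_{ij}c^r_{pq}$ and $X_s(c^k_{ij})$ as independent unknowns. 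This is also what makes your openness claim a genuine rank condition.
\item Choose exponents so that each square $(c^k_{ij})^2$ with $k\neq i,j$ stands alone at some power of $\lambda$, forcing $c^k_{ij}=0$.
\item Extract the relations $(c^p_{pi}-c^q_{qi})^2=0$.
\item Combine the surviving differential relations \eqref{eq:der} with the Jacobi identities to show that $\sum_i c^{p_i}_{p_i i}\theta^i$ is closed. The remaining freedom is then conformal by the Poincar\'e lemma.
\end{enumerate}
That last step is a differential argument, not one more independent linear equation.
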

\begin{proof}
The proof proceeds along the lines of the above proof of Theorem \ref{thm1}, with Proposition \ref{prop2} replacing Proposition \ref{prop1}. Let $M$ be a manifold of dimension $4$. Fix a $4$-dimensional vector space $V$ and a basis $(v_1,v_2,v_3, v_4)$ in $V$. We shall identify $\PP^3$ with $\PP(V)$. Then, there exist functions $\phi^i_s$, $i=1,\ldots,4$, $s=1,2$ such that an open subset of a ruled surface  $\CC_0\subset\PP^3$ can be parameterized in the following way
\[
\lambda\mapsto \spn\left\{\phi^1_s(\lambda)v_1+\cdots+\phi^4_s(\lambda)v_4\ |\  s=1,2\right\}\in\mathrm{Gr}_2(V).
\]
Note that we can assume that $(\phi^t_s)_{s=1,2}^{t=3,4}$ is the identity matrix and consequently, the cone is defined by the four functions $(\phi^t_s)_{s,t=1,2}$. Thus, as a matrix-valued smooth function 
\[
\phi(\lambda)=\begin{pmatrix}
    \phi^1_1(\lambda) & \phi^2_1(\lambda) & 1 & 0\\
    \phi^1_2(\lambda) & \phi^2_2(\lambda) & 0 & 1
\end{pmatrix}^T, \qquad\det\left(\begin{smallmatrix}
    \phi^1_1 & \phi^2_1\\
    \phi^1_2 & \phi^2_2
\end{smallmatrix}\right)\neq 0,
\]
which follows from the standard presentation of an affine chart in a Grassmanian. 

Let $\CC$ be an isotrivial cone structure modeled on $\CC_0$. As in the previous proof, it follows from isotriviality that any point in $M$ has a neighborhood with a local frame $(X_1,\ldots,X_4)$ such that, for any $x$ in this neighborhood, there is a linear map $f_x\colon T_xM\to V$ sending $X_i(x)$ to $v_i$ and transforming  $\hat \CC_x$ to $\hat \CC_0$. By abuse of notation, we denote the pullback $f^*\phi$ as $\phi$, as before. 

We get that the subset $N^\CC$ of the Grassmann bundle $\mathrm{Gr}_2(TM)$ can be locally parameterized as
\[
(x,\lambda)\mapsto\spn\left\{\phi^1_s(\lambda)X_1+\cdots+\phi^4_s(\lambda)X_4\ |\  s=1,2\right\}\in\mathrm{Gr}_2(V)
\]

We shall assume that the surface $\CC_0$ does not admit any symmetry and then the frames $(X_1,\ldots,X_4)$ are only given up to a conformal rescaling
\[
(X_1,\ldots,X_4)\mapsto \kappa (X_1,\ldots,X_4),
\]
where $\kappa\colon M\to\R$ is a positive function. As before, the structural functions of the frame will be denoted by $c_{ij}^k$, i.e. $[X_i, X_j] = \sum_{k=1}^4 c_{ij}^k X_k$. Under a conformal rescaling, the structural functions transform according to \eqref{conformal}.

Denote
\[
\begin{aligned}
&W_i(x)=\phi^1_i(\lambda)X_1(x)+\phi^2_i(\lambda)X_2(x)+X_{i+2}(x)\\
&V_i(x)=\tfrac{\dd}{\dd\lambda} W_i(x)=(\phi^1_i)'(\lambda)X_1(x)+(\phi^2_i)'(\lambda)X_2(x),\quad i=1,2.
\end{aligned}
\]
For the cone structure to be of equation type, we first require that $(\partial_\lambda,W_1,W_2,V_1,V_2)$ is a $C^\infty(N^\CC)$-linearly independent set, which is equivalent to
\[
\det
\begin{pmatrix}
	(\phi^1_1)' & (\phi_1^2)'\\
	(\phi^1_2)' & (\phi_2^2)'
\end{pmatrix}\neq 0.
\]
However, since $\phi^i_j,\; i,j=1,2$ parametrize a generic surface, the (Whitney) open condition $\smash{\det\big((\phi^i_j)'_{i=1,2}\big)\neq 0}$ on the first jets holds generically. In other words, the requirement that $\smash{(\partial_\lambda,W_i,V_i)_{i=1,2}}$ is a local frame on $N^\mathcal{C}$ is satisfied for a generic $\mathcal{C}_0$. 

Secondly, following Proposition \ref{prop2}, we require that $\VV$ contains an integrable rank two sub-distribution $\WW$ such that $\VV=\XX\oplus\WW$. We find $\WW$ explicitly by expressing it in terms of two vector fields 
\[
\begin{aligned}
&L_1=W_1+\eta_1(x,\lambda)\partial_\lambda=\phi^1_1(\lambda)X_1(x)+\phi^2_1(\lambda)X_2(x)+X_3(x)+\eta_1(x,\lambda)\partial_\lambda,\\
&L_2=W_2+\eta_2(x,\lambda)\partial_\lambda=\phi^1_2(\lambda)X_1(x)+\phi^2_2(\lambda)X_2(x)+X_4(x)+\eta_2(x,\lambda)\partial_\lambda
\end{aligned}
\]
satisfying
\begin{equation}\label{eq:firstintegrability}
[L_1,L_2]=gL_1+hL_2=gW_1+hW_2+(g\eta_1+h\eta_2)\partial_\lambda,
\end{equation}
for some, $g,h\in C^\infty(N^\CC)$.
In general, we have
\[
[L_1,L_2]=[W_1,W_2]+\eta_1\partial_\lambda W_2-\eta_2\partial_\lambda W_1+\big(W_1(\eta_2)-W_2(\eta_1)+(\eta_1\eta_2'-\eta_2\eta_1')\big)\partial_\lambda,
\]
%Thus, in our problems, we seek frames $(X_1,X_2,X_3,X_4)$, whose structure functions $\smash{c^k_{ij}(x)}$ satisfy the integrability condition \ref{eq:firstintegrability}. We will show that $\smash{c^k_{ij}(x)}$ vanish identically.
and we first focus on \eqref{eq:firstintegrability} modulo $\partial_\lambda$, that is, without the vertical terms. This corresponds to the assertion that $[\WW,\WW]\subseteq \VV$. These equations can be always solved for any generic rank 3 distribution (i.e., a distribution with growth vector $(3,5)$), since every such distribution $\VV$ admits a unique rank 2 subdistribution $\WW$ defined by $[\WW,\WW]\subseteq \VV$ \cite[Lemma 6.12]{Mon}. This part of the integrability condition takes the form
\begin{equation}\label{eq:laxintegrability}
[W_1,W_2]+\eta_1\partial_\lambda W_2-\eta_2\partial_\lambda W_1
= g W_1 + h W_2,
\end{equation}
giving an explicit linear algebraic system for $\eta_1$, $\eta_2$, as well as $g$ and $h$, which can be solved explicitly. In fact, computing the bracket $[W_1,W_2]$, we obtain
\[
[W_1,W_2]=\sum_{k=1}^4\Phi^k X_k,
\]
where
\[
\Phi^k=\phi_1^i \phi_2^j c^k_{ij}+\phi_1^i c^k_{i4}+\phi_2^j c^k_{3j}+ c^k_{34}
\]
and substituting this into \eqref{eq:laxintegrability}, we find that $\eta_1$ and $\eta_2$ are given by
\[
\begin{aligned}
&\eta_1=\tfrac{1}{(\phi^1_1)'(\phi^2_2)'-(\phi^1_2)'(\phi^2_1)'}\big( 
(\phi^2_1)'\big(\Phi^1-g\phi^1_1-h\phi^1_2\big)
-(\phi^1_1)'\big(\Phi^2-g\phi^2_1-h\phi^2_2\big)
\big)\\
&\eta_2=\tfrac{1}{(\phi^1_1)'(\phi^2_2)'-(\phi^1_2)'(\phi^2_1)'}\big(
(\phi^2_2)'\big(\Phi^1-g\phi^1_1-h\phi^1_2\big)
-(\phi^1_2)'\big(\Phi^2-g\phi^2_1-h\phi^2_2\big)
\big),
\end{aligned}
\]
where, additionally $g$ and $h$ are explicitly given as
\[
g=\Phi^3,\qquad h=\Phi^4.
\]

Now, we will consider equation \eqref{eq:firstintegrability} with the vertical terms, which is the actual integrability condition, that is, $[\WW,\WW]\subset \WW$. We have the single PDE for $\eta_1,\eta_2$ arrising from coefficients multiplying $\partial_\lambda$:
\begin{equation}\label{eq:integrability2}
\eta_1\eta_2'-\eta_2\eta_1'-g\eta_1-h\eta_2+W_1(\eta_2)-W_2(\eta_1)=0.
\end{equation}

Crucially, equation \eqref{eq:integrability2} still has $\lambda$-dependent and $x$-dependent terms separated. Furthermore, it is linear in the $x$-derivatives $X_s(c^k_{ij})$, $s=1,2,3,4$ from the $W_i(\eta_j)$ terms and quadratic in $c^k_{ij}$ from the remaining terms. It does not include linear terms $c^k_{ij}$.  Thus, in total, the integrability equation \eqref{eq:integrability2} is formulated on the $d$-dimensional vector subspace
\[
C^\infty(\R)\otimes\spn\{c^k_{ij}c^r_{pq}, X_s(c^k_{ij})\}\subset C^\infty(N^\mathcal{C}),
\]
where $i,j,k,p,q,r,s=1,2,3,4$ and $d=|\{c^k_{ij}c^r_{pq}, X_s(c^k_{ij})\}|$.

Note that consecutive $\lambda$-differentiations of \eqref{eq:integrability2} produce new non-trivial equations satisfied by $\{c^k_{ij}c^r_{pq}, X_s(c^k_{ij})\}$. This yields a finite overdetermined set of arbitrary many equations. 
Schematically, equation \eqref{eq:integrability2} has the form 
\begin{equation}\label{eq:schematicform}
B_{kr}^{ijpq}(\lambda)c^k_{ij}c^r_{pq}(x)+C^{sij}_{k}(\lambda)X_s(c^k_{ij})(x)=0
\end{equation}
and the linear system obtained from the jet prolongations takes the form
\begin{align}\label{eq:jetprolongedsystem}
\begin{pmatrix}
    B & C\\
    B^{(1)} & C^{(1)}\\
    \vdots & \vdots\\
    B^{(\nu)} & C^{(\nu)}
\end{pmatrix}\begin{pmatrix}
    c^k_{ij}c^r_{pq}\\
    X_s(c^k_{ij})
\end{pmatrix}=0,\end{align}
where all coefficients $B^{(i)}$ and $C^{(i)}$ depend on the $(i+1)$-jet of the original surface in $\PP^3$, as they are expressed in terms of derivatives of the defining function $\phi(\lambda)$ up to sufficiently high order. Note that the rows of the matrix are indexed by consecutive jets. To complete the proof, it remains to show that, for a generic surface, the system admits only the trivial solution $c^k_{ij}$ modulo conformal rescaling. As in the previous case, we will show that there exists at least one ruled surface satisfying the statement of the theorem.

We construct, analogously to the earlier case, a $\mathrm{GL}(2)$-valued function
\begin{align*}
\begin{pmatrix}
    \phi^1_1 & \phi_1^2\\
    \phi^1_2 & \phi^2_2
\end{pmatrix}=\sum_{i=1}^\mu\begin{pmatrix}
    \lambda^{m_1^i} & \lambda^{m_2^i}\\
    \lambda^{m_3^i} & \lambda^{m_4^i}
\end{pmatrix}\end{align*}
for a sufficiently large $\mu\in\N$ and certain $m_a^i\in\N$ such that $m_a^i\gg m_b^{i-1}$ and which produce pairwise distinct combinations of the form $\sum_{j=1}^4c^jm_j^i$ for each value of $i=1,\ldots,\mu$, where $c^j\in\{0,1,2,3,4\}$.

After clearing the denominator of equation \eqref{eq:integrability2}, one obtains a polynomial expression in $\lambda$ whose coefficients are combinations of $c^k_{ij}c^p_{qr}$ and  $X_s(c^k_{ij})$ multiplied by monomials in $m^i_j$'s. The exponents of $\lambda$ involve sums of $m^i_j$ with, in general, arbitrary indices $i$ and $j$ in the range $1,\ldots,\mu$ and $1,\ldots,4$, respectively. However, for each fixed $i$, there is a group of exponents involving only $m^i_1,\ldots, m^i_4$. The expressions corresponding to these powers of $\lambda$ are identical for different $i$, differing only by the respective $m^i_j$ appearing in the formulas. For instance, next to $\lambda^{3m^i_2+3m^i_3}$ we have
\[
\begin{aligned}
&m^i_2(m^i_3)^2 \Big(c^1_{12}c^2_{12}+ 2\,c^1_{12}c^3_{31}+ c^2_{12}c^4_{24}+ 2\,c^3_{31}c^4_{24}- X_1(c^1_{12})-X_1(c^4_{24})\Big)\\
&\qquad\qquad
- (m^i_2)^2m^i_3 \Big(c^1_{12}c^2_{12}+ c^1_{12}c^3_{31}+ 2\,c^2_{12}c^4_{24}+ 2\,c^3_{31}c^4_{24}+ X_2(c^2_{12})+ X_2(c^3_{31})\Big).
\end{aligned}
\]
Taking the expressions corresponding to different $i$, i.e., with different values of the $m^i_j$, we conclude that, in order for \eqref{eq:integrability2} to hold, each coefficient corresponding to a monomial in $m^i_j$'s must vanish. In particular, from the above expression we obtain two equations
\[
\begin{aligned}
&c^1_{12}c^2_{12}+ 2\,c^1_{12}c^3_{31}+ c^2_{12}c^4_{24}+ 2\,c^3_{31}c^4_{24}- X_1(c^1_{12})- X_1(c^4_{24})=0,\\
&c^1_{12}c^2_{12}+ c^1_{12}c^3_{31}+ 2\,c^2_{12}c^4_{24}+ 2\,c^3_{31}c^4_{24}+ X_2(c^2_{12})+ X_2(c^3_{31})=0.
\end{aligned}
\]
In general, we consider equations obtained by requiring the coefficients corresponding to monomials of the form
\[
(m^i_1)^{p_1}(m^i_2)^{p_2}(m^i_3)^{p_3}(m^i_4)^{p_4}\lambda^{c^1m^i_1+\cdots+c^4m^i_4}
\]
to vanish, for all possible $p_1,\ldots,p_4, c^1,\ldots,c^4 \in \N$. The constants $c^j$ are never grater than $4$ and our assumption ensures that all exponents $\sum_{j=1}^4c^jm_j^i$ are different. To proceed, we have written a simple code in Wolfram Mathematica for the purpose of analyzing equation \eqref{eq:integrability2}. 

At first we check that 12 quadratic terms of the form
\[
(c^{k_0}_{i_0j_0})^2, \qquad i_0\neq k_0, \quad j_0\neq k_0
\]
correspond to distinct powers of $\lambda$. Hence, they all vanish if  \eqref{eq:integrability2} holds. Explicitly, the terms we eliminate, together with their unique degrees (for any $i=1,\ldots,\mu$), are
\renewcommand{\arraystretch}{1.3}
\[
\begin{array}{ccl|ccl}
(c^1_{24})^2 & \text{deg.} & 3m^i_3+m^i_4 & (c^1_{32})^2 & \text{deg.} & m^i_3+3m^i_4 \\
(c^1_{34})^2 & \text{deg.} & m^i_3+m^i_4 & (c^2_{14})^2 & \text{deg.} & 3m^i_1+m^i_2 \\
(c^2_{31})^2 & \text{deg.} & m^i_1+3m^i_2 & (c^2_{34})^2 & \text{deg.} & m^i_1+m^i_2 \\
(c^3_{12})^2 & \text{deg.} & 4m^i_1+m^i_3+3m^i_4 & (c^3_{14})^2 & \text{deg.} & 4m^i_1+m^i_3+m^i_4 \\
(c^3_{24})^2 & \text{deg.} & 2m^i_1+3m^i_3+m^i_4 & (c^4_{12})^2 & \text{deg.} & 3m^i_1+m^i_2+4m^i_4 \\
(c^4_{31})^2 & \text{deg.} & m^i_1+3m^i_2+2m^i_4 & (c^4_{32})^2 & \text{deg.} & m^i_1+m^i_2+4m^i_4 \\
\end{array}
\]
Furthermore, we may eliminate terms of the form $c^{k_0}_{i_0j_0}c^k_{ij}$, with $i_0\neq k_0\neq j_0$, for arbitrary $i,j,k$. For the remaining coefficients $c_{ij}^k$, we are left with equations involving their derivatives with respect to $X_1,\ldots,X_4$ as well as certain relations that do involve only the quadratic terms. Direct, but long computations, shows that these equations, together with the Jacobi identities, reduce the equations involving the derivatives to equations of the form
\begin{equation}\label{eq:der}
X_i(c^p_{pj})-X_j(c^q_{qi})+c^p_{pj}c^j_{ij}+c^q_{qi}c^i_{ij}=0.
\end{equation}
On the other hand, we also get formulae of the form
\begin{equation}\label{eq:quadr}
-2c^p_{pi}c^q_{qi}+\left(c^p_{pi}\right)^2+\left(c^q_{qi}\right)^2=0,
\end{equation}
for $p\neq i$ and $q\neq i$; for instance, the expression $(c^2_{12}+c^3_{31})^2=0$ can be found next to exponent $m_1^i+3m_2^i+2m_3^i$ or $(c^2_{12}+c^4_{41})^2=0$ next to exponent $3m_1^i+m_2^i+2m_4^i$.
%For instance, from the above equations obtained from the coefficient of  $\lambda^{3m^i_2+3m^i_3}$ and form the Jacobi identity we get
%\[
%X_1(c^1_{12})+X_2(c^4_{14})=c^1_{12}c^2_{12}-c^1_{12}c^4_{14}
%\]
%among others relations.
From \eqref{eq:quadr} we get that $c^p_{pi}=c^q_{qi}$. From \eqref{eq:der}, we directly deduce that all $c^p_{pj}$ are necessarily of the form $X_j(\kappa)$ for some fixed function $\kappa$, i.e., the only freedom in choosing $c^k_{ij}$ comes from conformal transformations of the frame. This follows from the Poincaré lemma applied to the $1$-forms
\[
\alpha:=c^{p_1}_{{p_1}1}\theta^1+c^{p_2}_{{p_2}2}\theta^2+c^{p_3}_{{p_3}3}\theta^3+c^{p_4}_{{p_4}4}\theta^4,
\]
where $\theta^i$ are 1-forms dual to $X_i$ and $p_i$ are indices such that $p_i\neq i$. Indeed, the $1$-forms are necessarily closed, provided equation \eqref{eq:der} holds. This completes the proof.
\end{proof}

\begin{example}
Surfaces (apart from conformal structures) admitting non-flat isotrivial cone structures include the Cayley structures described in \cite{KM}, as well as the structures presented in \cite{K3}, modeled on certain homogeneous surfaces (see \cite{DSV,DDKR} for the lists of the homogeneous surfaces in $\PP^3$).
\end{example}

\begin{remark}
We expect that our main theorem holds for any $m$ and $n>3$. In fact, the line of proof presented here in the case $m=1$, as well as for $m=2$ with $n=4$, seems to be valid in a more general context as well, but the details (in particular, the construction of an example needed in the proof) should be carefully checked. In any case, the cases considered in the present paper seem to be the most interesting.
\end{remark}

\paragraph{\bf Lax systems.}
Once a coordinate system on $M$ is fixed, the vector fields $L_1, L_2$ can be interpreted as a dispersionless Lax pair defining a system of equations for the coefficients of the vector fields $X_i$, which provide an adapted frame for the underlying cone structure. This yields a master equation governing all isotrivial cone structures modeled on a fixed cone. However, one may also assume a more specific ansatz for the vector fields $X_i$, depending on a finite number of unknown functions and their derivatives (see, e.g., \cite[Section 8]{FK1} for various classical equations and their Lax pairs arising in conformal geometry). An upshot of the theorem is that, for a generic surface in $\PP^3$, the corresponding Lax systems generating isotrivial cone structures modeled on this surface, as described in the Introduction, necessarily admit only the trivial solution. This supports the statement that \emph{meaningful integrable systems are rare} and, in particular, it is a hard task to find Lax systems depending on the spectral parameter in a non-polynomial way.

\subsection{Intrinsic torsion.}
In this section we provide further characterization of the set of generic submanifolds in Theorems \ref{thm1} and \ref{thm2} in terms of the naturally associated torsion.
%We restrict ourselves to the case of curves, i.e.~$m=1$.
For a detailed study of connections associated with the cone structures, see~\cite{H3}. Here, we recall only the constructions relevant to our purposes.
Firstly, we restrict ourselves to the case of curves, i.e.~$m=1$.

For each $p \in N^\CC$, there is a natural filtration of $T_pN^\CC$ defined by distributions $\VV^i$. Indeed, we have
\[
\XX\subset\VV^0\subset\ldots\subset\VV^{n-1}=TN^\CC.
\]
This filtration reduces the full frame bundle of $N^\CC$ to a subbundle $F_\CC$ with the structure group $G < \mathrm{GL}(n+1)$, consisting of upper triangular matrices.  Indeed, we shall consider frames
\[
(e_0,e_1,\ldots,e_n)
\]
on $N^\CC$ such that $e_0$ spans $\XX$, and $e_0,\ldots,e_{i+1}$ span $\VV^i$ for $i=0,\ldots,n-1$. There is clearly a natural action of upper triangular matrices on this set of frames. Let $\mathfrak{g}$ denote the associated Lie algebra. We shall consider principal $G$-connections on $N^\CC$. These are defined as $G$-equivariant 1-forms $\omega$ on $F_\CC$ with values in $\mathfrak{g}$, such that the evaluation of $\omega$ on any fundamental vector field on $F_\CC$ equals the corresponding element of $\mathfrak{g}$.

It follows from the definition that the difference of two principal $G$-connections is a horizontal 1-form (a pullback of a 1-form on $\CC$). For a given frame $(e_0,e_1,\ldots,e_n)$ at $p\in\CC$, the tautological soldering form identifies $T_p\CC$ with $\R^{n+1}$ by sending $(e_i)$ to the standard basis of $\R^{n+1}$. Under this identification the difference of two $G$-connections is interpreted as a section of $\Hom(\R^{n+1},\mathfrak{g})$ over $\CC$.

Recall that the Spencer operator
\[
\delta\colon\Hom(\R^{n+1},\mathfrak{g})\to\Hom(\R^{n+1}\wedge\R^{n+1},\R^{n+1})
\]
is given by
\[
\delta(A)(v,w)=A(v)w-A(w)v.
\]
It computes the difference of torsion tensors corresponding to two principal $G$-connections. Then, the essential torsion of a $G$-structure is defined as an element of
\[
\Hom(\R^{n+1}\wedge\R^{n+1},\R^{n+1})/\operatorname{Im}(\delta)
\]
given by
\[
T(\nabla)\mod\operatorname{Im}(\delta),
\]
where $\nabla$ is an arbitrary $G$-connection. It can be understood as the part of the torsion tensor which does not depend on a particular choice of $\nabla$.

In the specific case of $G$-structures on $\CC$ we observe that
\[
\delta(A)(e_i,e_j)\in\spn\left\{e_0,e_1,\ldots,e_{\max(i,j)}\right\}.
\]
Defining $T_{ij}^k$ by the formula
\[
T(\nabla)(e_i,e_j)=\sum_kT_{ij}^ke_k
\]
we get that the components of the essential torsion are given by $T_{ij}^k$ such that $k>\max(i,j)$.

Consequently, Proposition \ref{prop1} can be equivalently phrased as follows:
\begin{proposition}\label{prop3}
A non-degenerate cone structure structure is of equation type if and only if the only non-trivial coefficients $T_{ij}^k$ of the essential torsion are such that $k=\max(i,j)+1$.
\end{proposition}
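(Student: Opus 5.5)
The plan is to show that the condition of Proposition \ref{prop1}, namely $[\VV^i,\VV^i]=\VV^{i+1}$ for $i=0,\ldots,n-2$, is exactly the vanishing of the essential torsion components $T_{ij}^k$ with $k>\max(i,j)+1$. The computation will be done in an adapted frame, and the independence of the choice of connection will be handled by the Spencer operator observation already made above.

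First, fix a local section of $F_\CC$, i.e.\ an adapted frame $(e_0,\ldots,e_n)$ with $e_0\in\Gamma(\XX)$ and $\spn\{e_0,\ldots,e_{i+1}\}=\VV^i$. Take any $G$-connection $\nabla$. Its torsion is
\[
T(\nabla)(e_i,e_j)=\nabla_{e_i}e_j-\nabla_{e_j}e_i-[e_i,e_j].
\]
Since $\nabla$ is a $G$-connection, it preserves the filtration. Hence $\nabla_{e_i}e_j\in\spn\{e_0,\ldots,e_j\}$ and $\nabla_{e_j}e_i\in\spn\{e_0,\ldots,e_i\}$, and both lie in $\spn\{e_0,\ldots,e_{\max(i,j)}\}$. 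Therefore, for $k>\max(i,j)$,
\[
T_{ij}^k=-\big([e_i,e_j]\big)^k .
\]
This confirms that these components do not depend on $\nabla$, in agreement with $\operatorname{Im}\delta$ lying in $\spn\{e_0,\ldots,e_{\max(i,j)}\}$. We will also check that they transform tensorially under a change of adapted frame by an element of $G$. This holds because the inhomogeneous terms involve derivatives of the triangular transition matrix and so again stay in lower filtration degree. As a result, the condition ``$T_{ij}^k=0$ for $k>\max(i,j)+1$'' is well defined.

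Second, translate Proposition \ref{prop1}. Non-degeneracy already gives $\VV^{i+1}=[\XX,\VV^i]$ of the right rank. The components with $i=0$ (bracketing with $e_0$) land in $\VV^{j}=\spn\{e_0,\ldots,e_{j+1}\}$ by the definition of the filtration, so for those the claim is automatic. A vector $e_i$ with $i\le j$ lies in $\VV^{j-1}$ when $j\ge 1$. By the Leibniz rule, the inclusion $[\VV^{j-1},\VV^{j-1}]\subset\VV^{j}$ holds if and only if $[e_i,e_j]\in\spn\{e_0,\ldots,e_{j+1}\}$ for all $i<j$, because function multiples only contribute terms in $\VV^{j-1}$. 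Running this over all levels, the condition $[\VV^i,\VV^i]\subset \VV^{i+1}$ for all $i$ is equivalent to $([e_i,e_j])^k=0$ for $k>\max(i,j)+1$, that is, $T^k_{ij}=0$ for those $k$. The reverse inclusion $\VV^{i+1}\subset[\VV^i,\VV^i]$ is automatic, since $\XX\subset\VV^i$ and $[\XX,\VV^i]=\VV^{i+1}$. Finally, the components with $k=\max(i,j)+1$ are unconstrained and generically nonzero; for instance $T_{0j}^{j+1}\ne0$ encodes non-degeneracy. This yields exactly the statement.

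The main obstacle is the careful index bookkeeping. The filtration is shifted, with $\VV^i$ spanned by $e_0,\ldots,e_{i+1}$, so one has to make sure that ``$\max(i,j)+1$'' matches the level $\VV^{\max(i,j)}$. The indices $i,j$ for which the condition is vacuous, such as $\max(i,j)=n$, also need to be tracked. I would first verify the small cases $n=4$ explicitly to fix the offsets.
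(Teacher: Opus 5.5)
Your proposal is correct and follows essentially the same route as the paper. In an adapted frame, the filtration-preserving connection terms lie in $\spn\{e_0,\ldots,e_{\max(i,j)}\}$, so $T^k_{ij}=-([e_i,e_j])^k$ for $k>\max(i,j)$, and Proposition~\ref{prop1} then becomes the vanishing of those components with $k>\max(i,j)+1$; you also spell out the converse direction, the automatic inclusion $\VV^{i+1}\subset[\VV^i,\VV^i]$, and frame-independence, which the paper leaves implicit (it writes out only the direction from equation type to vanishing).
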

\begin{proof}
Let $\nabla$ be a $G$-connection on $\CC$. It can be equivalently interpreted as a linear connection on $\CC$ whose parallel transport preserves filtration $\VV^i$. The torsion coefficients can be computed using the frame $e_0:=\partial_\lambda$ and $e_i:=V_{i-1}$ where $V_0,\ldots,V_{n-1}$ are vector fields on $\CC$ defined in the previous section, which are generators of the distributions $\VV^i$.  In general we have
\[
T(\nabla)(e_i,e_j)=\nabla_{e_i}e_j-\nabla_{e_j}e_i-[e_i,e_j],
\]
and since $\nabla_{e_i}e_j\in\VV^j$ and $\nabla_{e_j}e_i\in\VV^i$ we get that Proposition \ref{prop1} implies that all $T_{ij}^k$ vanish for $k>\max(i,j)+1$ in the case of $\CC$ of equation type.
\end{proof}

In the case of surfaces in $\PP^3$, one has the filtration 
\[
\XX\subset \VV=\XX\oplus\WW\subset TN^\CC
\]
together with the additional condition that the integrable subdistribudion $\WW\subset \VV$ satisfies $[\XX,\WW]=TN^\CC$. Therefore, adapted frames $(e_0,e_1,e_2,e_3,e_4)$ are such that:
\[
\XX=\mathrm{span}\{e_0\},\quad \WW=\mathrm{span}\{e_1,e_2\},\quad \text{and}\quad [e_1,e_2]=0\;\;\mathrm{mod}\;\;\WW.
\]
The subgroup 
\[
G=\left\{\begin{pmatrix}
    a & 0 & v^T\\
   0 & B & C \\ 
   0 & 0 & D
\end{pmatrix}\in \GL(5)\quad \Bigg| \quad a\in\R\setminus\{0\},\; B,D\in\mathrm{GL}(2),\; C\in\R^{2\times 2},\; v\in \R^2\right\}
\]
preserves such adapted frames. Hence, it is the structure group of the reduced frame bundle $F_\CC\rightarrow N^\CC$. The essential torsion in this case is 7-dimensional. It decomposes into three parts
\[
\Hom\left(\WW^{\wedge 2}, TN^\CC/\VV\right)
\;\oplus\;
\Hom\left(\WW\wedge (TN^\CC/\VV),\XX\right)
\;\oplus\;
\Hom\left((TN^\CC/\VV)^{\wedge 2}, \XX\right)
\]

Clearly, in order for the structure to be of equation type, the first term needs to vanish. Analogously to the previous case, Proposition \ref{prop2} may be equivalently phrased as
\begin{proposition}
 A non-degenerate cone structure is of equation type if and only if the essential torsion of the associated $G$-structure satisfies:
\begin{enumerate}
\item $T^k_{12}=0, \quad k\in\{0,3,4\}$ and
\item the $2\times 2$ matrix cell $T^j_{0i},\; i\in \{1,2\}$, $j\in\{3,4\}$ is invertible.
\end{enumerate}
\end{proposition}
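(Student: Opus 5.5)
The plan is to translate the two conditions of Proposition~\ref{prop2} into torsion components, computed in a well-chosen adapted frame, exactly as was done in the proof of Proposition~\ref{prop3}.

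\emph{Setup.} Let $\nabla$ be any $G$-connection, i.e.\ a linear connection on $N^\CC$ preserving $\XX$, $\WW$ and $\VV$. In an adapted frame $(e_0,\dots,e_4)$ we have
\[
T(\nabla)(e_i,e_j)=\nabla_{e_i}e_j-\nabla_{e_j}e_i-[e_i,e_j].
\]
\emph{Step 1.} For $i,j\in\{1,2\}$ both covariant derivatives lie in $\WW$. Hence $T_{12}^k=-([e_1,e_2])^k$ for $k\in\{0,3,4\}$, and these are independent of $\nabla$. Their vanishing is exactly $[\WW,\WW]\subset\WW$, i.e.\ integrability of $\WW$. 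One small point to settle: the definition of adapted frames already imposes $[e_1,e_2]\in\WW$ modulo $\WW$. I would read this as the frame bundle being built from an arbitrary rank-$2$ complement $\WW$ of $\XX$ in $\VV$. For such $\WW$, the components $T^k_{12}$ measure the failure of $[\WW,\WW]\subset\WW$: the $k\in\{3,4\}$ part is $[\WW,\WW]\subset\VV$, and the $k=0$ part is the remaining vertical condition, matching \eqref{eq:firstintegrability}.

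\emph{Step 2.} Take $i=0$ and $j\in\{1,2\}$. Then $\nabla_{e_0}e_j\in\WW$ and $\nabla_{e_j}e_0\in\XX$, so
\[
T_{0j}^k=-([e_0,e_j])^k \quad \text{for } k\in\{3,4\},
\]
again independent of $\nabla$. The map $\WW\to TN^\CC/\VV$ induced by $e_j\mapsto[e_0,e_j]\bmod \VV$ is tensorial, since $[e_0,fe_j]=f[e_0,e_j]+e_0(f)e_j$ and the extra term lies in $\VV$. It is represented by the matrix $(T^k_{0j})$. Moreover $[\XX,\WW]=TN^\CC$ holds iff $\XX+\WW+[\XX,\WW]$ spans everything, which is iff this $2\times2$ block is invertible. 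Invertibility is invariant under the action of $G$, because $B$ and $D$ act by left and right multiplication by invertible matrices.

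\emph{Step 3.} Combine with Proposition~\ref{prop2}. If $\CC$ is of equation type, choose $\WW$ integrable with $[\XX,\WW]=TN^\CC$; Steps 1--2 give (1) and (2). Conversely, (1) forces $\WW$ integrable and (2) forces $[\XX,\WW]=TN^\CC$, so Proposition~\ref{prop2} applies.

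The main obstacle is Step~1, specifically whether $\WW$ is part of the structure or must be found. If only $\XX\subset\VV$ is given, one must argue the following. The $k\in\{3,4\}$ condition determines $\WW$ uniquely for a $(3,5)$ distribution, by \cite[Lemma 6.12]{Mon}. Once $\WW$ is so fixed, $T^0_{12}$ becomes a genuine invariant, and its vanishing is precisely the vertical equation \eqref{eq:integrability2}. I would handle this by fixing $\WW$ through the Montgomery lemma before defining the $G$-structure.
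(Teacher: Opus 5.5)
Your proposal is correct and follows the route the paper intends; the paper gives no separate proof, only ``analogously'' to Proposition~\ref{prop3}. Since a $G$-connection preserves $\XX$ and $\WW$, the components $T^k_{12}$, $k\in\{0,3,4\}$, and $T^j_{0i}$, $j\in\{3,4\}$, equal minus the corresponding bracket components, so (1) and (2) are exactly integrability of $\WW$ and $[\XX,\WW]=TN^\CC$ in Proposition~\ref{prop2}. Your fix for the paper's adapted-frame definition, which as written already assumes $[e_1,e_2]\in\WW$ and so makes (1) vacuous, is sound: treat $\WW$ as a complement to be chosen, or fix it canonically via \cite[Lemma 6.12]{Mon}, where non-degeneracy ($\rk[\XX,\VV]=5$) guarantees that $\WW$ is transverse to $\XX$.
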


\subsection{Characteristic connection.}\label{sec:char_con}
Consider an isotrivial cone structure $\CC$ modeled on a ruled surface $\CC_0 \in \mathfrak{R}^3_2$. Following \cite{H0}, we consider the so-called \emph{characteristic connections} for $\CC$, whose existence implies that the structure is flat \cite[Theorem 5.8]{H0}. Our aim here is to determine, \emph{a posteriori}, the characteristic connection in the context of Theorem~\ref{thm2}. Unlike before, we now work locally on the 6-dimensional manifold $\CC \subset \PP(TM)$ rather than on the 5-dimensional $N^\CC \subset \mathrm{Gr}_2(TM)$. Let $\pi \colon \CC \to M$ be the projection.

Similarly to the pair $(\XX,\VV)$ on $N^\CC$, we define a pair $(\DD,\HHH)$ on $\CC$. Namely, $\DD$ is the vertical rank-2 distribution on $\CC$,
\[
\DD = \ker \pi_*,
\]
while $\HHH$ is the tautological rank-3 distribution on $\CC$ inherited from the structure of $\PP(TM)$, i.e., for $p \in \CC$,
\[
\HHH(p) = \pi_*^{-1}(p).
\]
Clearly, $\DD \subset \HHH$ and the quotient $\HHH/\DD$ is isomorphic to the tautological line bundle over $\CC$. A line subbundle $\FF \subset \HHH$ is called a \emph{characteristic connection} of $\CC$ if
\[
\HHH = \FF \oplus \DD
\]
and
\[
[\FF,[\FF,\HHH]] \subset [\FF,\HHH].
\]
Note that this condition means that $\FF$ is contained in the Cauchy characteristic of $[\FF,\HHH]$.

To pass from the pair $(\XX,\VV)$ on $N^\CC$ to the pair $(\DD,\HHH)$ on $\CC$, we note that $\CC$ can be naturally identified with $\PP(\VV/\XX)$. In particular, the local frame $X_1,\ldots,X_4,\partial_\lambda$ used in the proof of Theorem~\ref{thm2} on $N^\CC$ can be extended by $\partial_v$, where $v$ parameterizes the fibers of $\PP(\VV/\XX)$ over $N^\CC$. Explicitly, $\VV/\XX$ is identified with $\WW\subset \VV$, and it has a fixed basis $(L_1,L_2)$ which descents to an affine parameter $v$ on fibers of $\PP(\VV/\XX)$ defined by formula $v\mapsto \spn\{L_1+vL_2\}$. In this setting,
\[
\HHH = \spn\{\partial_\lambda,\partial_v,L_1 + vL_2\}.
\]
Now, if the structure satisfies the assertion of the theorem (i.e.\ $\CC_0$ lies in the generic open dense subset), then $L_i$, $i=1,2$, have no vertical part $\partial_\lambda$, and all vector fields in the frame $(X_1,\ldots,X_4)$ on $M$ commute. It follows that the vector field $L_C = L_1 + vL_2$ on $\CC$ spans a line bundle satisfying the condition for a characteristic connection.

Indeed, for $\FF = \spn\{L_C\}$, one has
\[
[\FF,\HHH] = \spn\{\partial_\lambda,\partial_v,L_1,L_2,L_1' + vL_2'\},
\]
where, as before, $L_i' = \partial_\lambda L_i$. We observe that $L_C$ commutes with all $X_i$ and, since it does not contain $\partial_\lambda$, also with $L_i$ and $L_i'$. The only nontrivial brackets are of the form
\[
[\partial_\lambda,L_C] = L_1' + vL_2',\quad [\partial_v,L_C]=L_2,
\]
and hence $[\FF,[\FF,\HHH]] \subset [\FF,\HHH]$. In this way the characteristic connection is recovered from the Lax pair.

\end{document}